\crefname{section}{§}{§§}
\Crefname{section}{§}{§§}
\newtheorem{theorem}{Theorem}[section]
\newtheorem{proposition}[theorem]{Proposition}
\newtheorem{remark}{Remark}[section]
\definecolor{mve}{rgb}{0.7,0.35,0.15}
\definecolor{brght}{rgb}{0.825,0.2625,0.15}
\definecolor{yello}{rgb}{1,0.925,0.65}
\definecolor{bluu}{rgb}{0.65, 0.95, 1}
\definecolor{bluu2}{rgb}{0.2, 0.5, 0.8}
\title{Local sensitivity analysis of the `Membrane shape equation' derived from the Helfrich energy}
\author[1]{P. Rangamani\thanks{prangamani@ucsd.edu}} 
\affil[1]{Department of Mechanical and Aerospace Engineering, University of California San Diego}
\author[2]{A. Behzadan} 
\author[2]{M. Holst} 
\affil[2]{Department of Mathematics, University of California San Diego}
\begin{document}
\maketitle

\begin{abstract}%
{The Helfrich energy is commonly used to model the elastic bending energy of lipid bilayers in membrane mechanics. 
The governing differential equations for certain geometric characteristics of the shape of the membrane can be obtained by applying variational methods (minimization principles) to the Helfrich energy functional and are well-studied in the axisymmetric framework.
However, the Helfrich energy functional and the resulting differential equations involve a number of parameters, and there is little explanation of the choice of parameters in the literature, particularly with respect to the choice of the  ``spontaneous curvature" term that appears in the functional.
In this paper, we present a careful analytical and numerical study of certain aspects of parametric sensitivity of Helfrich's model.
Using simulations of specific model systems, we demonstrate the application of our scheme to the formation of spherical buds and pearled shapes in membrane vesicles.
}
\end{abstract}

\textbf{Keywords}
Helfrich model; membrane curvature; spontaneous curvature; parametric sensitivity analysis.

\tableofcontents



\newpage


\section{Introduction}\label{sec:Introduction}

The elastic behavior of lipid bilayers has been studied using mechanical models for fifty years or more \cite{Canham1970,Helfrich1973}. 
A pseudoelastic strain energy functional was first described by Canham in 1970 \cite{Canham1970} as a way to explain the biconcave shape of red blood cells and subsequently by Helfrich \cite{Helfrich1973} to describe the mechanical behavior of lipid bilayers in various situations. 
This energy functional has now become the most accepted model for describing the mechanical properties of the cell membrane. 
This model has been used to study the shapes associated with whole cells, particularly that of the red blood cell \cite{Canham1970,alimohamadi2020,turlier2016}.
Subsequently, the Helfrich energy has  been used to study the different shapes associated with vesicles as a function of pressure, volume , and membrane composition \cite{Seifert1997}. 

While the original model proposed by Helfrich considered the lipid bilayer as a thin shell with negligible thickness, over the years, there have been many mathematical developments to represent the different physical properties of biological membranes.
Some of these include the area difference model \cite{miao1994budding} and the mattress model \cite{mouritsen1984mattress}.
Among these, the spontaneous curvature model has been one of the most popularly used models to represent the asymmetry between the two leaflets of the bilayer \Cref{fig:Figure1}A \cite{alimohamadi2018}.
The idea of spontaneous curvature to represent the asymmetry between the two leaflets of the lipid bilayer was first introduced by Helfrich in his seminal 1973 paper \cite{Helfrich1973}.
Subsequently, this term has since been used to capture compositional asymmetry, protein-induced spontaneous curvature,  coat proteins etc. (see  \cite{alimohamadi2018} for a more detailed discussion).


For many problems of biophysical interest, it is necessary to consider the heterogeneity in composition across cell membranes \cite{kozlov2014mechanisms}. 
In such cases, the spontaneous curvature function is often modeled as a spatially varying quantity rather than as a constant to represent different membrane domains without introducing a discontinuity for computational purposes\cite{hassinger2017,agrawal2009modeling}.
Alongside theoretical modeling efforts, there have been significant advances in computational methods for solving the partial differential equations resulting from the minimization procedure of the Helfrich energy \cite{vasan2020,sauer2014}. 
A vast majority of the simulations are implemented under the assumption of axisymmetry; this assumption enables us to transform the partial differential equations describing the shape of the membrane into a system of ordinary differential equations, which are then equipped with appropriate boundary conditions to be solved \cite{luke1982} (\Cref{fig:Figure1}B). 
However, a major challenge associated with such simulations remains the number of free parameters associated with the spontaneous curvature function. 

We and others have found that the specific choice of the spontaneous curvature function and the resulting parameters play an important role in determining the shape of the membrane \cite{hassinger2017,walani2015endocytic}. 
For certain parameters such as membrane bending modulus, there exist sufficient experimental measurements to establish a range of physically relevant values \cite{waugh1982surface}. 
However, for the spontaneous curvature function, such measurements are limited or don't exist in forms that are always amenable to modeling.
As a result, throughout the literature, spontaneous curvature has been represented by different functions, using a wide range of parameters values.
 Thus, it seems there is a need for a better understanding of the sensitivity of the spontaneous curvature model with respect to the various parameters involved.

To address these issues and gain some insight into the role of parameters, as a first step, in this work we study the local sensitivity of solutions of certain equations associated with the Helfrich energy model with spontaneous curvature.
We note that while parametric sensitivity analysis methods are well-documented for initial value problems (IVPs) \cite{varma1999parametric}, in this work the problems we will consider are boundary value problems (BVPs). 
We will show that parametric sensitivity analysis can provide valuable insight into the impact of different parameters on energy minimization and a tool for better understanding critical phenomena associated with the shape of the membrane.

In what follows, we present a summary of the Helfrich model with spontaneous curvature and axisymmetric parametrization in \S \ref{sec:Helfrich_energy} and \S \ref{sec:axsi-param}, development of the parametric sensitivity analysis method in \S \ref{sec:over-sensitivity-analysis}, present some numerical results in \S \ref{sec:numer-results}, and end with our interpretations and conclusions in \S \ref{sec:conclusion}.

\begin{figure}[!!h]
    \centering
    \includegraphics[width=\textwidth]{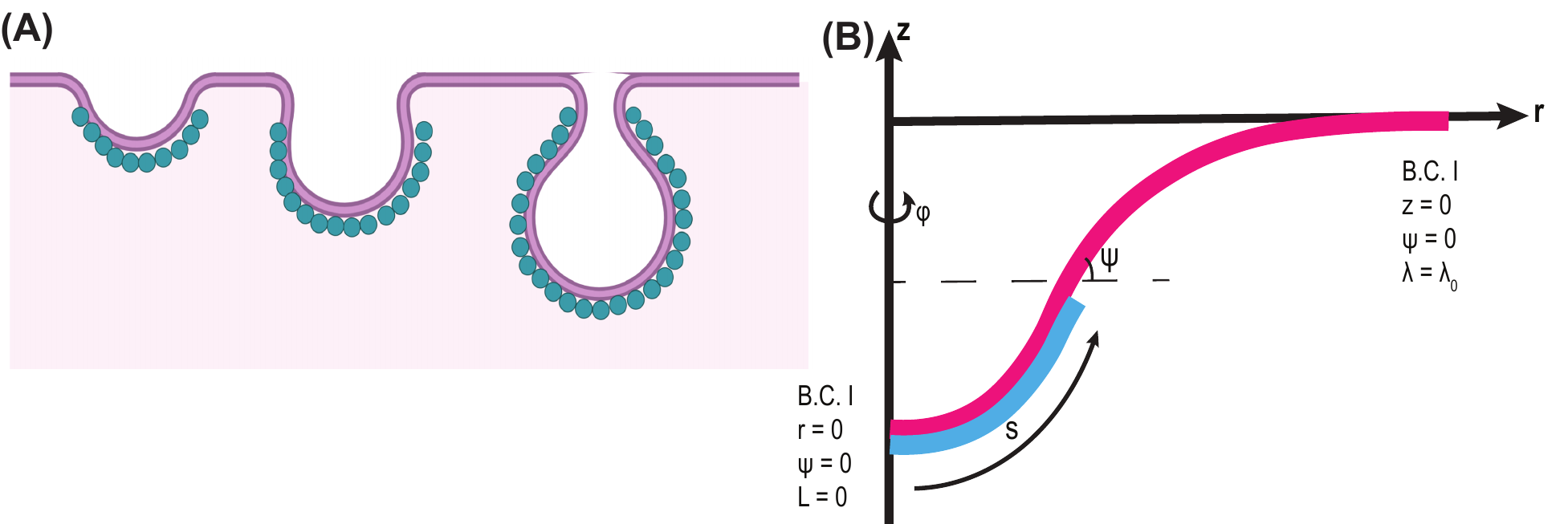}
    \caption{(A) Schematic depicting a sequence of membrane shapes associated with bending induced by a protein coat represented by blue circles. This protein coat or other asymmetries in the leaflet are often modeled using a spontaneous curvature. (B) Axisymmetric coordinates used to simulate the governing equations and the corresponding boundary conditions.}
    \label{fig:Figure1}
\end{figure}

\section{Overview of the Helfrich energy with spontaneous curvature}\label{sec:Helfrich_energy}
The Helfrich energy serves as the constitutive equation for the lipid bilayer. We use a modified version of the Helfrich energy that includes spatially-varying spontaneous curvature $C$ as opposed to a constant uniform value, as in \cite{steigmann1999fluid,agrawal2009boundary,hassinger2017}.

\begin{align}
w = \kappa \left[ H - C\right]^2 + \kappa_G K,
\label{eq:Helfrich}
\end{align}

\noindent where $w$ is the energy per unit area, $\kappa$ is the bending modulus, $H$ is the mean curvature, $\kappa_G$ is the Gaussian modulus, and $K$ is the Gaussian curvature.  
 This form of the energy density accommodates the local heterogeneity in the spontaneous curvature $C$. Note that $w$ differs from the standard Helfrich energy \cite{Helfrich1973} by a factor of $2$, which is accounted for by using the value of $\kappa$ to be twice that of the standard bending modulus typically  encountered in the literature.

\newpage



 



\subsection{Equations of motion}\label{sec:s-eqnmotion}

We refer the interested reader to \cite{steigmann1999fluid} for a detailed derivation of the governing equations.
We make the following simplifying assumptions for simplicity in our model. We assume that the bending modulus and Gaussian modulus are uniform, the pressure difference across the membrane is zero, and there are no externally applied forces.
Furthermore, we assume that the membrane is areally incompressible and introduce a Lagrange multiplier $\lambda$ to impose this constraint. 
It can be shown that among all areally incompressible surfaces, the one that minimizes the Helfrich energy functional has mean and Gaussian curvatures $H$ and $K$ satisfying:

\begin{align} 
\kappa \Delta \left[\left(H - C \right)\right] + 2 \kappa \left( H - C \right) \left(2 H^2 - K \right) - 2 \kappa H \left( H - C \right)^2 =  2 \lambda H  \label{eq:shape_simplified}
\end{align} 
and 
\begin{align} 
\frac{\partial\lambda}{\partial \theta^\alpha} =  2 \kappa \left( H - C \right)\frac{\partial C}{\partial \theta^\alpha}\,,
\label{eq:lambda_simplified}
\end{align}
where $\theta^{\alpha}$ ($\alpha=1, 2$) denotes the surface coordinates and $\Delta$ is the Laplace-Beltrami operator. \Cref{eq:shape_simplified} gives the normal force balance on the membrane. 
\Cref{eq:lambda_simplified} represents the tangential force balance along the membrane. 
For a heterogeneous membrane (non-uniform or coordinate-dependent $C(\theta^\alpha)$), this equation also represents the variation of $\lambda$ along the surface \cite{steigmann1999fluid}. 
Detailed discussions of interpretation of $\lambda$ are given in \cite{steigmann1999fluid, rangamani2013interaction, rangamani2014protein}.

\subsection{Choice of spontaneous curvature function}\label{sec:s-Spontaneous-choice}
In the existing literature, for cases where the situation under consideration can be classified as an axisymmetric problem, the spontaneous curvature function, $C(\theta^\alpha)$, is often represented using a hyperbolic tangent function to capture differences in the `coated' versus `uncoated' regions. 
In this work, we use two forms of this function. 
Each form involves certain free parameters and we perform our parametric sensitivity analysis to understand the extent to which these parameters can affect the elastic bending energy and the shape of the membrane.
These functions are given as

\begin{align}
& \textrm{Type I:}\qquad  C(u)=0.5C_0\big[1-\tanh{[\xi(u-u_0)]}\big],\\
 & \textrm{Type II:}\qquad C(u)=-0.5C_0\frac{u-u_0}{u_0}\big[1-\tanh{[\xi(u-u_0)]}\big] . 
 \end{align}
 
In the above formulae, we used the generic variable $u$ instead of $\theta^\alpha$.
This variable can be viewed as one of the coordinates used in the parametrization of the surface of revolution representing the membrane as will be described in the next section.

\section{Axisymmetric parametrization}\label{sec:axsi-param}

Under the assumption of axisymmetry, which in part implies that the surface representing the membrane is a surface of revolution obtained by rotating a regular curve about an axis, the governing equations shown in \Cref{eq:shape_simplified,eq:lambda_simplified} can be recast as a system of ordinary differential equations. 
When equipped with appropriate boundary conditions, reflecting the geometric and physical constraints of the problem, the solution of the resulting boundary value problem will determine the shape of the membrane. 
In what follows, we first describe this viewpoint in more detail, and then we summarize the various ways our ODE system can be represented depending on the choice of arc-length or area parametrization. \\

Under the assumption of axisymmetry, without loss of generality, we may assume that the surface of the membrane is generated by rotating a curve in the right half of the $rz$-plane, about the $z$-axis. 
We let $\textbf{p}:[0,S]\rightarrow \mathbb{R}^2$ defined by $s\mapsto (r(s),z(s))$
denote the arc-length parametrization of the generating curve (or the profile curve). 
Then 
 \begin{equation}
     \textbf{r}(s,\varphi)=\big(r(s)\cos{\varphi}, r(s)\sin{\varphi}, z(s)\big)
 \end{equation}
is a parametrization of the surface of membrane (here $0\leq s\leq S$ and $0\leq \varphi\leq 2\pi$). 
Since $[r'(s)]^2+[z'(s)]^2=1$, there exists a function $\Psi(s)$ such that
 \begin{equation}
     r'(s)=\cos{\Psi{(s)}},\qquad z'(s)=\sin{\Psi{(s)}}.
 \end{equation}
$\Psi$ can be viewed as the angle made by the curve with the horizontal.

Using well-known formulas for the Gaussian and mean curvatures of a surface of revolution (see, for example, \cite{gray2006geometry}), it is easy to show that if we orient the surface with the unit normal vector that points in the direction of  $-\frac{\partial \textbf{r}}{\partial \textbf{s}}\times \frac{\partial \textbf{r}}{\partial \varphi}$, then
\begin{align}
    &r\Psi'=2rH-\sin{\Psi},\\
    &K=H^2-(H-\frac{\sin{\Psi}}{r})^2.
\end{align}
In order to be able to write the final equations as first-order equations, we introduce the auxiliary variable $L$ as follows:
\begin{equation}
     L=r\big[H-C\big]'.
\end{equation}
Now, using the above equalities and  \Cref{eq:shape_simplified},  it is easy to see that 

\begin{align}
    & H'(s)=r(s)^{-1}L(s)+C'(s),\\
    &r(s)^{-1}L'(s)=2H(s)\big[(H(s)-C(s))^2+\frac{\lambda(s)}{\kappa}\big] \\
    \nonumber
    &\qquad \qquad\qquad\qquad  -2(H(s)-C(s))
    \big[H(s)^2+\big(H(s)-\frac{\sin{\Psi(s)}}{r(s)}\big)^2\big]. 
\end{align}
In the above, we used the fact that if $f$ is any function defined on our surface of revolution whose values depend only on $s$, then  
\begin{equation*}
\Delta f= \frac{[r(s) f'(s)]'}{r(s)}.
\end{equation*}
Finally, \Cref{eq:lambda_simplified} gives
\begin{align}
    \lambda'(s)=2\kappa (H(s)-C(s))C'(s).
\end{align}

\begin{remark}
In what has been discussed so far, we have referred to the \emph{assumption of axisymmetry} a number of times without clearly explaining what this assumption entails. Now we are at a position to give a careful description of this key assumption. In the present work, we say that our problem falls into the axisymmetric category if and only if 
\begin{enumerate}
    \item the surface of the membrane is a surface of revolution parametrized by $\textbf{r}(s,\varphi)$ as described above, and
    \item the values of the spontaneous curvature function $C$ depend only on $s$.
\end{enumerate}
\end{remark}

\subsection{Arc-length formulation}\label{sec:s-arclength-1}

In this formulation, as described above, $s$ is the arc-length along the membrane and the unknown functions are 
\begin{align}
r(s),\,z(s),\,\Psi(s),\, H(s), \, L(s),\,\lambda(s).
\end{align}
These functions must satisfy the following system of ODEs on the interval $[0,S]$:
\begin{subequations}
\begin{align}
    & r'=\cos{\Psi},\quad z'=\sin{\Psi},\quad r\Psi'=2rH-\sin{\Psi},\quad H'=r^{-1}L+C',\quad\\ &r^{-1}L'=2H\big[(H-C)^2+\frac{\lambda}{\kappa}\big]-2(H-C)\big[H^2+(H-r^{-1}\sin{\Psi})^2\big],\\
    & \lambda'=2\kappa(H-C)C'.
\end{align}
\end{subequations}

Before we attempt to numerically solve the above system of equations, we need to provide the system with appropriate boundary conditions. 
Certain suitable classes of boundary conditions for the above system  will be introduced in \S \ref{sec:numer-results}.

\subsection{Dimensionless arc-length formulation}\label{sec:s-arclength-2}
In this formulation, we fix two positive constants $R_0$ and $\kappa_0$ that will be used to nondimensionalize the variables in the arc-length formulation by setting $t=\frac{s}{R_0}$,  $\tilde{\kappa}=\frac{\kappa}{\kappa_0}$, and defining:
\begin{subequations}
\begin{align}
&x(t)=\frac{1}{R_0}r(R_0t),\quad y(t)=\frac{1}{R_0}z(R_0t),\quad \psi(t)=\Psi(R_0t),\quad h(t)=R_0H(R_0t),\\
&c(t)=R_0C(R_0t),\quad l(t)=R_0L(R_0t),\quad \tilde{\lambda}(t)=\frac{R_0^2}{\kappa_0}\lambda(R_0t).
\end{align}
\end{subequations}
Note that $s\in [0,S]$ and $t\in [0,T]$ where $S=R_0T$.\\
A simple application of chain rule shows that the six dimensionless unknown functions $x(t)$,
$ y(t)$, $\psi(t)$, $h(t)$, $l(t)$, and $\tilde{\lambda}(t)$ must satisfy the following system of ODEs on the interval $[0,T]$:
\begin{subequations}
\begin{align}
& \dot{x}=\cos{\psi},\quad \dot{y}=\sin{\psi},\quad x\dot{\psi}=2xh-\sin{\psi},\quad \dot{h}=x^{-1}l+\dot{c},\\
& x^{-1}\dot{l}=2h\big[(h-c)^2+\frac{\tilde{\lambda}}{\tilde{\kappa}}\big]-2(h-c)\big[h^2+(h-x^{-1}\sin{\psi})^2\big],\\
& \dot{\tilde{\lambda}}=2\tilde{\kappa}(h-c)\dot{c}.
\end{align}
(Here dot denotes the derivative with respect to $t$.)
\end{subequations}
\begin{remark}
It is worth mentioning that if we assume $C(s)$ has the following form:
\begin{equation}
    C(s)=0.5C_0\big[1-\tanh{[\xi (s-s_0)]}\big],
\end{equation}
where $C_0$, $\xi$, and $s_0$ are certain constants, then
\begin{align}
c(t)=R_0C(R_0t)=0.5C_0R_0\big[1-\tanh{[\xi (R_0t-s_0)]}\big]=
0.5C_0R_0\big[1-\tanh{[\xi R_0 (t-\frac{s_0}{R_0})]}\big].
\end{align}
So if we let $\gamma=R_0 \xi$ and $t_0=\frac{s_0}{R_0}$, then
\begin{equation}
    c(t)=0.5C_0R_0\big[1-\tanh{[\gamma (t-t_0)]}\big].
\end{equation}
Similar calculations show that if we assume $C(s)$ has the form
\begin{equation}
    C(s)=(-0.5C_0)(\frac{s-s_0}{s_0})\big[1-\tanh{[\xi (s-s_0)]}\big],
\end{equation}
then
\begin{equation}
    c(t)=(-0.5C_0)R_0(\frac{t-t_0}{t_0})\big[1-\tanh{[\gamma (t-t_0)]}\big].
\end{equation}
where $\gamma=R_0 \xi$ and $t_0=\frac{s_0}{R_0}$.
\end{remark}
\begin{remark} \label{remscaling1}
In the future sections of this paper, we will be interested in the derivatives of the solution functions with respect to input parameters such as $C_0$. 
These derivatives will be referred to as sensitivities.
An application of the chain rule shows that sensitivities calculated using dimensionless variables are constant multiples of sensitivities computed using the original variables. 
For example,
\begin{align}
\frac{\partial h}{\partial C_0}\big|_{t=\hat{t}}&=R_0\frac{\partial H}{\partial C_0}\big|_{s=R_0\hat{t}},\\
    \frac{\partial h}{\partial \gamma}\big|_{t=\hat{t}}&=\frac{1}{R_0}\frac{\partial h}{\partial \xi}\big|_{t=\hat{t}}=\frac{1}{R_0}R_0\frac{\partial H}{\partial \xi}\big|_{s=R_0\hat{t}}=\frac{\partial H}{\partial \xi}\big|_{s=R_0\hat{t}},\\
    \frac{\partial h}{\partial t_0}\big|_{t=\hat{t}}&=\frac{1}{1/R_0}\frac{\partial h}{\partial s_0}\big|_{t=\hat{t}}=\frac{1}{1/R_0}R_0\frac{\partial H}{\partial s_0}\big|_{s=R_0\hat{t}}=R_0^2\frac{\partial H}{\partial s_0}\big|_{s=R_0\hat{t}}.
    \end{align}
\end{remark}

\subsection{Area formulation}\label{sec:s-area-1}
Here we introduce the new variable $a$ as the area of the surface of revolution produced by rotating the segment of the curve spanned as the arc-length varies from $0$ to $s$, that is,
\begin{equation}
    a(s)=\int_0^s2\pi r(u)\,du.
\end{equation}
Since there is a one-to-one relationship between $a$ and $s$, we can view the six unknown functions as functions of $a$ rather than $s$.
As it pointed out in \cite{hassinger2017}, this formulation has the advantage of prescribing the total area of the membrane as the domain size (rather than the corresponding arc-length) which is more physical and amenable to laboratory measurements.

A simple application of the chain rule shows that, if we denote the total area (of the membrane) by $A$, then the unknown functions
\begin{align}
r(a),\,z(a),\,\Psi(a),\, H(a), \, L(a),\,\lambda(a)
\end{align}
must satisfy the following system of ODEs on the interval $[0,A]$:
\begin{subequations}
\begin{align}
    & 2\pi r r'=\cos{\Psi},\quad 2\pi r z'=\sin{\Psi},\quad 2\pi r^2\Psi'=2rH-\sin{\Psi},\quad 2\pi r^2 H'=L+2\pi r^2C',\quad\\ &2\pi L'=2H\big[(H-C)^2+\frac{\lambda}{\kappa}\big]-2(H-C)\big[H^2+(H-r^{-1}\sin{\Psi})^2\big],\\
    & \lambda'=2 \kappa (H-C) C'.
\end{align}
\end{subequations}
\noindent As noted above, we will discuss suitable boundary conditions in \S \ref{sec:numer-results}.\begin{remark}\label{remmay242}
We emphasize that in the above formulae, the functions $r$, $z$, $\Psi$, $H$, $L$, $\lambda$, and $C$ which originally were introduced as functions of $s$, are viewed as functions of the variable $a$; indeed, if we let $g(s)=\int_0^s 2\pi r(u)\,du$, then any function of $s$ can be written as a function of $a$ by replacing each instance of $s$ in the expression of the function by $g^{-1}(a)$; so if we wanted to be completely rigorous, in the above, instead of $r$, $z$, $\Psi$, $H$, $L$, $\lambda$, and $C$, we should have written $\hat{r}$, $\hat{z}$, $\hat{\Psi}$, $\hat{H}$, $\hat{L}$, $\hat{\lambda}$, and $\hat{C}$, where $\hat{r}=r(g^{-1}(a))$, $\hat{z}=z(g^{-1}(a))$, etc. It is customary to abuse notation and denote these new variables by the same symbols as the original variables. In \S \ref{sec:s-insights} we will make use of this very elementary observation to better understand the relationship between the simulation results obtained using the arc-length formulation and those obtained using the area formulation. 
\end{remark}

\subsection{Dimensionless area formulation}\label{sec:s-area-2}
In this formulation, we fix two positive constants $R_0$ and $\kappa_0$ which will be used to nondimensionalize the variables in the previous formulation by setting $\alpha=\frac{a}{2\pi R_0^2}$, $\tilde{\kappa}=\frac{\kappa}{\kappa_0}$, and defining:
\begin{subequations}
\begin{align}
&x(\alpha)=\frac{1}{R_0}r(2\pi R_0^2 \alpha),\quad y(\alpha)=\frac{1}{R_0}z(2\pi R_0^2 \alpha),\quad \psi(\alpha)=\Psi(2\pi R_0^2 \alpha),\quad h(\alpha)=R_0H(2\pi R_0^2 \alpha),\\
&c(\alpha)=R_0C(2\pi R_0^2 \alpha),\quad l(\alpha)=R_0L(2\pi R_0^2 \alpha),\quad \tilde{\lambda}(\alpha)=\frac{R_0^2}{\kappa_0}\lambda(2\pi R_0^2 \alpha).
\end{align}
\end{subequations}
Note that $a\in [0,A]$ and $\alpha\in [0,\alpha_{max}]$ where $A=2\pi R_0^2 \alpha_{max}$.\\
A simple application of chain rule shows that the six dimensionless unknown functions $x(\alpha)$, $y(\alpha)$, $\psi(\alpha)$ , $h(\alpha)$, $l(\alpha)$, and  $\tilde{\lambda}(\alpha)$ must satisfy the following system of ODEs on the interval $[0,\alpha_{max}]$:
\begin{subequations}
\begin{align}
& x\dot{x}=\cos{\psi},\quad x\dot{y}=\sin{\psi},\quad x^2\dot{\psi}=2xh-\sin{\psi},\quad x^2\dot{h}=l+x^2 \dot{c},\\
& \dot{l}=2h\big[(h-c)^2+\frac{\tilde{\lambda}}{\tilde{\kappa}}\big]-2(h-c)\big[h^2+(h-x^{-1}\sin{\psi})^2\big],\\
& \dot{\tilde{\lambda}}=2\tilde{\kappa}(h-c)\dot{c}.
\end{align}
\end{subequations}
\begin{remark}
It is worth mentioning that if we assume $C(a)$ has the following form:
\begin{equation}
    C(a)=(-0.5C_0)(\frac{a-a_0}{a_0})\big[1-\tanh{[\xi (a-a_0)]}\big],
\end{equation}
where $C_0$, $\xi$, and $a_0$ are certain constants, then
\begin{align}
c(\alpha)&=R_0C(2\pi R_0^2 \alpha)=-0.5C_0R_0
(\frac{2\pi R_0^2 \alpha-a_0}{a_0})\big[1-\tanh{[\xi (2\pi R_0^2 \alpha-a_0)]}\big]\notag\\
&=
-0.5C_0R_0\frac{\alpha-a_0/(2\pi R_0^2 \alpha)}{a_0/(2\pi R_0^2 \alpha)}
\big[1-\tanh{[\xi 2\pi R_0^2 (\alpha-\frac{a_0}{2\pi R_0^2})]}\big].
\end{align}
So if we let $\gamma=2\pi R_0^2 \xi$ and $\alpha_0=\frac{a_0}{2\pi R_0^2}$, then
\begin{equation}
    c(\alpha)=-0.5C_0R_0(\frac{\alpha-\alpha_0}{\alpha_0})\big[1-\tanh{[\gamma (\alpha-\alpha_0)]}\big].
\end{equation}
Similarly, one can show that if $C(a)$ has the form
\begin{equation}
    C(a)=(0.5C_0)\big[1-\tanh{[\xi (a-a_0)]}\big],
\end{equation}
then
\begin{equation}
    c(\alpha)=0.5C_0R_0\big[1-\tanh{[\gamma (\alpha-\alpha_0)]}\big],
\end{equation}
where  $\gamma=2\pi R_0^2 \xi$ and $\alpha_0=\frac{a_0}{2\pi R_0^2}$.
\end{remark}

\begin{remark} \label{remscaling2}
\
The same argument as the one discussed in Remark \ref{remscaling1} shows that sensitivities calculated using dimensionless variables are constant multiples of sensitivities computed using the original variables.
\end{remark}

We conclude this section by making a few comments about the choice of the positive constants $R_0$ and $\kappa_0$ that are used in order to nondimensionalize the variables. 
The following observations indeed show existence of certain symmetries in the system of ODEs under consideration. Generally, it is true that symmetries of differential equations can be used to gain valuable information about the equations under consideration.
In \S \ref{sec:disprove}, we will use these observations to examine a conjecture about the solutions of our system of equations.

\begin{enumerate}
    \item Let $\eta$ be any positive number. It is easy to see that $(x,y,\psi, h, l, \tilde{\lambda})$ is a solution of the ODE system using $(R_0, \kappa_0)$ if and only if $(x,y,\psi, h, l, \eta\tilde{\lambda})$ is a solution of the system using $(R_0, \eta\kappa_0)$. 
    That is, changing $\kappa_0$ merely amounts to scaling $\tilde{\lambda}$. 
    In our numerical experiments we choose $\kappa_0=\kappa$ so that $\tilde{\kappa}=1$.
    \item For the dimensionless arc-length formulation, $(x(t),y(t),\psi(t), h(t), l(t), \tilde{\lambda}(t))$ is a solution of the ODE system with $c(t)$ defined using parameters $C_0,R_0, \gamma, t_0$ on the interval $[0,T]$ using $(R_0, \kappa_0)$ if and only if $(\frac{1}{\eta}x(\eta t),\frac{1}{\eta}y(\eta t),\psi(\eta t), \eta h(\eta t), \eta l(\eta t), \eta^2 \tilde{\lambda}(\eta t))$ is a solution of the ODE system with $c(t)$ defined using parameters $C_0,\eta R_0,\eta \gamma, \frac{t_0}{\eta}$ on the interval $[0,\frac{T}{\eta}]$ using $(\eta R_0, \kappa_0)$.
    \item For the dimensionless area formulation, $(x(\alpha),y(\alpha),\psi(\alpha), h(\alpha), l(\alpha), \tilde{\lambda}(\alpha))$ is a solution of the ODE system with $c(\alpha)$ defined using parameters $C_0,R_0, \gamma, \alpha_0$ on the interval $[0,\alpha_{max}]$ using $(R_0, \kappa_0)$ if and only if $(\frac{1}{\eta}x( \eta^2 \alpha),\frac{1}{\eta} y( \eta^2 \alpha),\psi( \eta^2 \alpha), \eta h(\eta^2 \alpha), \eta l(\eta^2 \alpha), \eta^2 \tilde{\lambda}(\eta^2  \alpha))$ is a solution of the ODE system with $c(\alpha)$ defined using parameters $C_0, \eta R_0,\eta^2 \gamma, \frac{\alpha_0}{\eta^2}$ on the interval $[0,\frac{\alpha_{max}}{\eta^2}]$ using $(\eta R_0, \kappa_0)$.
\end{enumerate}
Finally, we remark that when solving the boundary value problem using different values of $R_0$ (and/or $\kappa_0$), in order to be able to compare the results in the sense described above, we also need to change the boundary conditions accordingly.

\section{Mathematical framework for sensitivity analysis}\label{sec:over-sensitivity-analysis}
In this section we give a brief overview of the theoretical framework of sensitivity analysis for the system of ODEs dependent on parameters. The subject is well studied in the context of initial value problems \cite{varma1999parametric}  and, as illustrated in this and in the following sections, the same key ideas can be employed to analyze the sensitivity of boundary value problems such as those that will be considered in this paper.

Consider the following system of ODEs for the unknown 
$\vec{x}(u)=[x_1(u),\ldots,x_k(u)]^T$
involving parameters 
$\vec{p}=[p_1,\ldots,p_m]^T$:
\begin{equation}\label{april261}
\textbf{F}(\vec{x}, \dot{\vec{x}},\vec{p})=\begin{bmatrix}f_1(\vec{x}, \dot{\vec{x}},\vec{p})\\\vdots\\f_k(\vec{x}, \dot{\vec{x}},\vec{p})\end{bmatrix}=\vec{0}\,.
\end{equation}
with appropriate initial or boundary conditions. 
Two key objectives of sensitivity analysis would be to provide answers to the following questions:
\begin{enumerate}
\item How can we compute the rate of change of solution with respect to each of the parameters? That is, we are interested in computing
\begin{equation*}
\frac{\partial x_i}{\partial p_j}\qquad \forall\, 1\leq i\leq k\quad \forall\, 1\leq j\leq m.
\end{equation*}
\item How can we compute the rate of change of some functional $\displaystyle W(\vec{x},\vec{p})=\int_0^{u_{max}}w(\vec{x},\vec{p})\,du$ of the solution with respect to each parameter? That is, we are interested in computing
    \begin{equation*}
    \frac{\partial W}{\partial p_j}\qquad  \forall\, 1\leq j\leq m.
    \end{equation*}
\end{enumerate}
Throughout this document we will refer to $\frac{\partial x_i}{\partial p_j}$ and $\frac{\partial W}{\partial p_j}$ as sensitivities of solution components and sensitivities of the functional, respectively. Notice that under appropriate smoothness assumptions on the functions involved, it follows from the chain rule that
\begin{equation}\label{april262}
\frac{\partial W}{\partial p_j}=\int_0^{u_{max}} \frac{\partial w}{\partial p_j}+(D_{\vec{x}}\,w)\frac{\partial \vec{x}}{\partial p_j}\,du,
\end{equation}
and so the answer to the first question can be used to answer the second question. However, as we shall see, if the ultimate objective is to just compute the sensitivity of a functional, there might be more efficient tools available for the job.\\\\
We begin with describing a standard method to answer the first question. Define the sensitivity vectors $\vec{s}_1,\cdots,\vec{s}_m$ as follows:
\begin{align}\label{april263}
\vec{s}_1=\frac{\partial \vec{x}}{\partial p_1}=\begin{bmatrix}\frac{\partial x_1}{\partial p_1}\\\vdots\\\frac{\partial x_k}{\partial p_1}\end{bmatrix},\cdots,\vec{s}_m=\frac{\partial \vec{x}}{\partial p_m}=\begin{bmatrix}\frac{\partial x_1}{\partial p_m}\\\vdots\\\frac{\partial x_k}{\partial p_m}\end{bmatrix}.
\end{align}
Now note that
\begin{align}\label{april268}
\textbf{F}(\vec{x},\dot{\vec{x}},p_1,\cdots,p_m)=\vec{0}\,\Longrightarrow\, (D_{\vec{x}}\textbf{F})\frac{\partial \vec{x}}{\partial p_j}+
(D_{\dot{\vec{x}}}\textbf{F})\frac{\partial \dot{\vec{x}}}{\partial p_j}+\frac{\partial \textbf{F}}{\partial p_j}=\vec{0}\,.
\end{align}
That is,
\begin{equation}\label{april264}
(D_{\dot{\vec{x}}}\textbf{F})\dot{\vec{s}}_j+(D_{\vec{x}}\textbf{F})\vec{s}_j+\frac{\partial \textbf{F}}{\partial p_j}=\vec{0}\,.
\end{equation}
So the sensitivities $\vec{s}_j$'s themselves satisfy a system of ODEs (consisting of $k\times m$ equations). The initial or boundary conditions for $\vec{s}_j$ are obtained by taking the partial derivatives of the initial and boundary conditions satisfied by $\vec{x}$ with respect to $p_j$.\\\\
By appending system (\ref{april264}) consisting of $k\times m$ equations to the original system (\ref{april261}), we can construct a system of ODEs with $k\times (m+1)$ equations:
\begin{align}\label{april265}
& \textbf{F}(\vec{x}, \dot{\vec{x}},\vec{p})=\vec{0},\\
&(D_{\dot{\vec{x}}}\textbf{F})\dot{\vec{s}}_j+(D_{\vec{x}}\textbf{F})\vec{s}_j+\frac{\partial \textbf{F}}{\partial p_j}=\vec{0}\,.
\end{align}
Solving the above system with a fixed set of values $\tilde{p}_1,\cdots,\tilde{p}_m$ as parameters and with the appropriate initial and boundary conditions (as described above) gives the original unknowns together with the sensitivities $\displaystyle \vec{s}_j=\frac{\partial \vec{x}}{\partial p_j}$ evaluated at $(\tilde{p}_1,\cdots,\tilde{p}_m)$. If we are only interested in computing sensitivities with respect to one particular parameter $p_j$, it is enough to append the $k$ equations corresponding to $p_j$ to the original system (and hence solving a system with $2k$ equations).\\\\
Now let us focus on the second question. As it was mentioned, we can answer the second question by computing each individual sensitivity using the method explained above (see \Cref{april262}). However, there is at least one more approach that can be used to directly compute the sensitivity of a functional $W$ with respect to the parameters. In this work, we are interested in both sensitivities of solution components and sensitivities of certain functionals of solutions;  hence we will only employ the first approach. Nevertheless, for the sake of completeness, here we briefly describe this alternative approach. In order to explain this second approach, which is sometimes referred to as the ``adjoint sensitivity analysis" \cite{petzold2006}, we need to make a simple observation which we state as a proposition.
\begin{proposition}\label{april266}
Let $\textbf{F}$ and $w$ be as above. Suppose that $\displaystyle \vec{v}(u)=[v_1(u),\ldots ,v_k(u)]^T$ solves the following system of ODEs:
\begin{equation}\label{april267}
\frac{d}{du}\big[\vec{v}^TD_{\dot{\vec{x}}}\textbf{F}\big]-\vec{v}^TD_{\vec{x}}\textbf{F}=-D_{\vec{x}}w.
\end{equation}
Then
\begin{equation}
(D_{\vec{x}}w)\frac{\partial \vec{x}}{\partial p_j}=-\vec{v}^T\frac{\partial \textbf{F}}{\partial p_j}-\frac{d}{du}\big[\vec{v}^TD_{\dot{\vec{x}}}\textbf{F}\frac{\partial \vec{x}}{\partial p_j}\big].
\end{equation}
\end{proposition}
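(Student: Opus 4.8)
The plan is to obtain the identity directly from the sensitivity equation \eqref{april264} that $\vec{s}_j=\partial\vec{x}/\partial p_j$ already satisfies, treating the adjoint variable $\vec{v}$ as a Lagrange-type multiplier and performing a single integration by parts (in differential form). The whole point of introducing $\vec{v}$ is that the adjoint equation \eqref{april267} is engineered precisely so that the terms involving the \emph{unknown} derivative $\dot{\vec{s}}_j$ collapse into one total derivative, leaving behind only the desired combination $(D_{\vec{x}}w)\vec{s}_j$. Thus the proof is an algebraic rearrangement, with no analytic machinery beyond the product rule.

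First I would solve the sensitivity equation \eqref{april264} for the forcing term, $\frac{\partial\textbf{F}}{\partial p_j}=-(D_{\dot{\vec{x}}}\textbf{F})\dot{\vec{s}}_j-(D_{\vec{x}}\textbf{F})\vec{s}_j$, and contract on the left with $-\vec{v}^T$ to get
\[
-\vec{v}^T\frac{\partial\textbf{F}}{\partial p_j}=\vec{v}^T(D_{\dot{\vec{x}}}\textbf{F})\dot{\vec{s}}_j+\vec{v}^T(D_{\vec{x}}\textbf{F})\vec{s}_j.
\]
Next I would expand the total derivative appearing in the claimed right-hand side by the product rule,
\[
\frac{d}{du}\big[\vec{v}^TD_{\dot{\vec{x}}}\textbf{F}\,\vec{s}_j\big]=\frac{d}{du}\big[\vec{v}^TD_{\dot{\vec{x}}}\textbf{F}\big]\vec{s}_j+\vec{v}^TD_{\dot{\vec{x}}}\textbf{F}\,\dot{\vec{s}}_j,
\]
and subtract it from the previous line. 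The two copies of $\vec{v}^TD_{\dot{\vec{x}}}\textbf{F}\,\dot{\vec{s}}_j$ cancel exactly, and what survives is $\big(\vec{v}^TD_{\vec{x}}\textbf{F}-\frac{d}{du}[\vec{v}^TD_{\dot{\vec{x}}}\textbf{F}]\big)\vec{s}_j$. Finally, the defining adjoint equation \eqref{april267} says precisely that the parenthesized quantity equals $D_{\vec{x}}w$, giving $(D_{\vec{x}}w)\vec{s}_j=(D_{\vec{x}}w)\frac{\partial\vec{x}}{\partial p_j}$, which is the assertion.

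I do not expect a genuine obstacle here, since this is bookkeeping rather than a deep result; the step demanding the most care is tracking the matrix–vector structure so that every product remains conformable. Concretely, $D_{\vec{x}}\textbf{F}$ and $D_{\dot{\vec{x}}}\textbf{F}$ are $k\times k$ Jacobians, $\vec{v}$ and $\vec{s}_j$ are column $k$-vectors, and each expression $\vec{v}^T(\,\cdot\,)\vec{s}_j$ must reduce to a scalar compatible with the scalar gradient $D_{\vec{x}}w$; it is easy to misplace a transpose when differentiating the row vector $\vec{v}^TD_{\dot{\vec{x}}}\textbf{F}$. I would also make explicit that the manipulation rests on the same smoothness hypotheses already used to derive \eqref{april264}, namely that $\vec{s}_j$ exists and is continuously differentiable and that the mixed partials in \eqref{april268} may be exchanged, so that the product rule above is justified.
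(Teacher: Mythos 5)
Your proof is correct and is essentially the paper's own argument run in reverse: the paper starts from $(D_{\vec{x}}w)\frac{\partial \vec{x}}{\partial p_j}$, substitutes the adjoint equation (\ref{april267}), applies the product rule to shift the derivative, and closes with the sensitivity identity (\ref{april268}), whereas you begin by contracting the sensitivity equation with $-\vec{v}^T$ and invoke (\ref{april267}) at the end. Since both arguments use exactly the same three ingredients and the same cancellation of the $\vec{v}^T D_{\dot{\vec{x}}}\textbf{F}\,\dot{\vec{s}}_j$ terms, they coincide up to the order of the algebraic steps.
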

\begin{proof}
We have
\begin{align*}
(D_{\vec{x}}w)\frac{\partial \vec{x}}{\partial p_j}&\stackrel{(\ref{april267})}{=}
\vec{v}^TD_{\vec{x}}\textbf{F}\frac{\partial \vec{x}}{\partial p_j}-
\frac{d}{du}\big[\vec{v}^TD_{\dot{\vec{x}}}\textbf{F}\big]\frac{\partial \vec{x}}{\partial p_j}\\
&=\vec{v}^TD_{\vec{x}}\textbf{F}\frac{\partial \vec{x}}{\partial p_j}-
\frac{d}{du}\big[\vec{v}^TD_{\dot{\vec{x}}}\textbf{F}\frac{\partial \vec{x}}{\partial p_j}\big]+\vec{v}^TD_{\dot{\vec{x}}}\textbf{F}\frac{\partial \dot{\vec{x}}}{\partial p_j}\\
&\stackrel{(\ref{april268})}{=}\vec{v}^T\big[-\frac{\partial \textbf{F}}{\partial p_j}\big]-\frac{d}{du}\big[\vec{v}^TD_{\dot{\vec{x}}}\textbf{F}\frac{\partial \vec{x}}{\partial p_j}\big].
\end{align*}
\end{proof}
It follows that if $\vec{v}(u)$ satisfies (\ref{april267}) then for each $1\leq j\leq m$
\begin{align*}
\frac{\partial W}{\partial p_j}&=\int_0^{u_{max}} \frac{\partial w}{\partial p_j}+(D_{\vec{x}}w)\frac{\partial \vec{x}}{\partial p_j}\,du\\
&= \int_0^{u_{max}}  \frac{\partial w}{\partial p_j}-\vec{v}^T\frac{\partial \textbf{F}}{\partial p_j}-\frac{d}{du}\big[\vec{v}^TD_{\dot{\vec{x}}}\textbf{F}\frac{\partial \vec{x}}{\partial p_j}\big]\,du\\
&=\int_0^{u_{max}} (\frac{\partial w}{\partial p_j}-\vec{v}^T\frac{\partial \textbf{F}}{\partial p_j})\,du-\big[\vec{v}^TD_{\dot{\vec{x}}}\textbf{F}\frac{\partial \vec{x}}{\partial p_j}\big]_{u=0}^{u=u_{max}}.
\end{align*}
Therefore, if we can choose side conditions for $\vec{v}$ such that the boundary term $\displaystyle \big[\vec{v}^TD_{\dot{\vec{x}}}\textbf{F}\frac{\partial \vec{x}}{\partial p_j}\big]_{u=0}^{u=u_{max}}$ vanishes, then we can follow this 2-step process to compute the sensitivities for the functional $W$:
\begin{itemize}
\item \textbf{Step 1:} Solve the following problem (with appropriate side conditions as described above) for the $2k$ unknowns $\vec{x}(u)=[x_1(u),\ldots,x_k(u)]^T$ and $\vec{v}(u)=[v_1(u),\ldots,v_k(u)]^T$:
    \begin{align*}
    \begin{cases}
    &\textbf{F}(\vec{x}, \dot{\vec{x}},\vec{p})=\begin{bmatrix}f_1(\vec{x}, \dot{\vec{x}},\vec{p})\\\vdots\\f_k(\vec{x}, \dot{\vec{x}},\vec{p})\end{bmatrix}=\vec{0},\\
    &\frac{d}{du}\big[\vec{v}^TD_{\dot{\vec{x}}}\textbf{F}\big]-\vec{v}^TD_{\vec{x}}\textbf{F}=-D_{\vec{x}}w.
    \end{cases}
    \end{align*}
\item \textbf{Step 2:} For each $1\leq j\leq m$ compute
\begin{equation*}
\frac{\partial W}{\partial p_j}=\int_0^{u_{max}}  (\frac{\partial w}{\partial p_j}-\vec{v}^T\frac{\partial \textbf{F}}{\partial p_j})\,du.
\end{equation*}
\end{itemize}

\section{Numerical sensitivity analysis of the Helfrich model}\label{sec:numer-results}
In this section, we illustrate the application of the theoretical considerations of sensitivity analysis by applying them to the ODEs of our problem of interest and by conducting numerical experiments for certain parameter choices. In \S \ref{sec:s-framework-for-numerical} we set up a framework for the numerical sensitivity analysis of our boundary value problem. In \Cref{sec:s-nondim-arclength-1,sec:s-nondim-arclength-molly1,sec:s-nondim-arclength-2,sec:s-nondim-arclength-molly2}  
we will discuss some of our numerical results obtained using the dimensionless arclength parametrization. The results obtained using the dimensionless area parametrization, which were consistent with what was observed using the arc-length parametrization, are presented in Appendix A.

\subsection{Framework for numerical sensitivity analysis of the system}\label{sec:s-framework-for-numerical}
In order to facilitate applying the theoretical considerations in \S \ref{sec:over-sensitivity-analysis} to our system of ODEs, we relabel the variables as follows:
\begin{subequations}
\begin{align}
&x_1=x,\quad x_2=y,\quad x_3=\psi,\\
&x_4=h,\quad x_5=l,\quad x_6=\tilde{\lambda}.
\end{align}
\end{subequations}
Using these new labels, we may rewrite our boundary value problem and set up the equations for direct (or adjoint) sensitivity analysis. For each of our numerical experiments, in addition to the choice of parametrization (arc-length parametrization vs. area parametrization), we had to make several other choices including:
\begin{enumerate}
\item Choice of spontaneous curvature function (explained below)
\item Choice of boundary conditions (explained below)
\end{enumerate}
Once we set up the boundary value problem, the MATLAB BVP solver `bvp4c' was used to numerically solve the system. 
Roughly speaking, the domain is partitioned into subintervals and on each subinterval the solution functions are approximated by  polynomials of degree at most 3. 
Note that a cubic polynomial has 4 coefficients, so in order to find the approximate solutions, the solver needs to find 4 coefficients for each unknown function on each subinterval. 
The equations needed to solve for the unknown coefficients are obtained by requiring that each approximate solution must be continuous on the entire interval, and also requiring the differential equation hold at certain points on each subinterval (so the method used by `bvp4c' is in essence a collocation method).

\begin{itemize}
    \item \textbf{Case 1: Dimensionless arc-length parametrization}\\
    Our original boundary value problem can be written as
    \begin{align}\label{may121}
& \textbf{F}(\vec{x},\dot{\vec{x}},C_0,\gamma,t_0)=\vec{0}\\
& \label{eq:bc1}\textrm{(B.C. I)}\,\,\begin{cases}
x_1(0^+)=0,\quad x_3(0^+)=0,\quad x_5(0^+)=0\\
x_2(T)=0,\quad x_3(T)=0,\quad x_6(T)=\tilde{\lambda}_0
\end{cases},\\
& \textrm{or}\notag\\
& \label{eq:bc2}\textrm{(B.C. II)}\,\,\begin{cases}
x_1(0^+)=\sin{\theta},\quad x_3(0^+)=\theta\\
x_2(T)=0,\quad x_3(T)=0,\quad x_5(T)=0,\quad x_6(T)=\tilde{\lambda}_0
\end{cases},
\end{align}
where $\theta$ is a fixed angle and $F$ is as follows:
\begin{align}\label{may122}
\textbf{F}(\vec{x},\dot{\vec{x}},C_0,\gamma,t_0)&=
\begin{bmatrix}
f_1(\vec{x},\dot{\vec{x}},C_0,\gamma,t_0)\\
\vdots\\
f_6(\vec{x},\dot{\vec{x}},C_0,\gamma,t_0)
\end{bmatrix}\notag\\
&=\begin{bmatrix}
\dot{x}_1-\cos{x_3}\\
\dot{x}_2-\sin{x_3}\\
\dot{x}_3-2x_4+\frac{\sin{x_3}}{x_1}\\
\dot{x}_4-\frac{x_5}{x_1}-\dot{c}\\
\dot{x}_5-2x_1x_4\big[(x_4-c)^2+\frac{x_6}{\tilde{\kappa}}\big]+2x_1(x_4-c)\big[x_4^2+(x_4-\frac{\sin{x_3}}{x_1})^2\big]\\
\dot{x}_6-2\tilde{\kappa}\dot{c}x_4+2\tilde{\kappa}c\dot{c}
\end{bmatrix}.
\end{align}
In the above, the function $c(t)$ represents the spontaneous curvature which, in the existing literature, as mentioned before, is chosen to have one of the following forms:
\begin{align}
& \textrm{Type I:}\qquad  c(t)=0.5R_0C_0\big[1-\tanh{[\gamma(t-t_0)]}\big],\\
 & \textrm{Type II:}\qquad c(t)=-0.5R_0C_0\frac{t-t_0}{t_0}\big[1-\tanh{[\gamma(t-t_0)]}\big]. \end{align}
\begin{remark}
Note that the graph of $\tanh$ has two horizontal asymptotes. Roughly, 
\begin{equation}
   \tanh{u}\approx\begin{cases}1\quad &u\geq 3\\-1\quad &u\leq -3\end{cases},
\end{equation}
and so
\begin{equation}
  1-\tanh{[\gamma (t-t_0)]}\approx\begin{cases}0\quad &t\geq t_0+\frac{3}{\gamma}\\2\quad &t\leq t_0-\frac{3}{\gamma}\end{cases},
\end{equation}
Due to this property, we may choose $\gamma$ and $t_0$ so that the graph of $1-\tanh{[\gamma(t-t_0)]}$ possesses an abrupt jump near $t_0$. However, if the parameters $\gamma$ and $t_0$ are chosen such that there is no ``abrupt" jump in the graph of $1-\tanh{[\gamma(t-t_0)]}$ for positive values of $t$ (in particular, this happens if $t_0>>0$ and $\gamma t_0<3$), we say our spontaneous curvature function has a \textbf{mollifying} property. As we shall see, whether or not the spontaneous curvature function is mollifying can affect the solution sensitivities.
\end{remark}

\begin{remark}
An explanation as to why the first set of boundary conditions \Cref{eq:bc1} physically make sense can be found in \cite{hassinger2017}. 
Here is one way to interpret the second set of boundary conditions \Cref{eq:bc2} above: We assume the protein coat consists of two segments; a segment with constant curvature $C_0$ and another segment over which the curvature of the coat decreases from $C_0$ to $0$. 
Clearly, we do not need to consider the constant curvature portion as part of our domain (because the shape of this part is already known). 
The prescribed value of $x_1(0^+)$ should be thought of as the distance from the $x_2$-axis marking the end of the a priori known spherical part of the coated membrane (with constant curvature $C_0$) and start of the part of the membrane whose shape we want to determine (the unknown part of the membrane). By prescribing a value for $x_3(0^+)$ we set the "direction" of the generating curve at the starting point of the unknown part of the membrane. When we use type (II) boundary conditions, our goal is to determine the shape of the part of the membrane that we do not know \textit{a priori}.
\end{remark}
\begin{remark}
Although both types of boundary conditions, \Cref{eq:bc1,eq:bc2}, are taken from the existing literature, it seems to us that each type comes with certain deficiencies that we believe should be clearly discussed.
\begin{itemize}
    \item In the first set of boundary conditions, $x_1(0^+)=0$ and $x_5(0^+)=0$ correspond to $r(0^+)=0$ and $L(0^+)=0$, respectively. However, $L(s)=r(s)[H(s)-C(s)]'$. So it seems that these equalities are not independent. In practice, for all of our simulations the value of $r(0^+)$ is set to be a number close to zero rather than zero, and it is scaled appropriately for the dimensionless parametrizations. This is done primarily in order to avoid numerical difficulties that may arise as a result of the presence of the term $\frac{1}{r(s)}$ in the equations. As a byproduct, this will also take care of the dependency described above. 
    \item In the second set of boundary conditions the side condition $x_5(0^+)=0$ is replaced by $x_5(T)=0$. Hence the above-mentioned dependency does not exist anymore. Nevertheless, it seems to us that the prescribed value of $x_1(0^+)$ is rather arbitrary. As explained above, here the goal was to have a nonzero value for $x_1(0^+)$ marking the end of the spherical part of the coat, however, we could not find any convincing geometric justification as to why $\sin{\theta}$ is a good choice for this nonzero value. 
\end{itemize}
Although we were aware of these issues, in order to make our results relatable to the existing literature on the subject, we decided not to depart from the standard boundary conditions used in the literature.
\end{remark}
The energy functional of interest, that is, the total elastic bending energy of the membrane can be represented by
\begin{align}
W(\vec{x},C_0,\gamma,\alpha_0)\sim\int_0^{T}[x_4(t)-c(t)]^2 x_1(t)\,dt.
\end{align}
Indeed, if we use the change of variable $s=R_0 t$, then
\begin{align}
    \int_0^{T}[h(t)-c(t)]^2x(t)\,dt&=\int_0^{T} R_0^2\big[H(R_0 t)-C(R_0 t)\big]^2\frac{r(R_0t)}{R_0}\,dt\notag\\
    &=\int_0^{R_0T}R_0 [H(s)-C(s)]^2r(s)\,\frac{d s}{R_0}\notag\\
    &=\int_0^S[H(s)-C(s)]^2r(s)\,ds\sim W.
\end{align}
In order to ensure that our notations are consistent with what was discussed in \S \ref{sec:over-sensitivity-analysis}, we also introduce $p_1$, $p_2$, and $p_3$ as follows:
\begin{equation}
p_1=C_0,\quad p_2=\gamma,\quad  p_3=t_0.
\end{equation}
Following what was discussed in \S \ref{sec:over-sensitivity-analysis}, for each $1\leq j\leq 3$, in order to find the sensitivities $\displaystyle \vec{s}_j=\frac{\partial \vec{x}}{\partial p_j}$, it is enough to solve the following system of ODEs for the unknowns $\vec{x}(t)$ and $\vec{s}_j(t)$:
\begin{align}
\begin{cases}
\qquad \qquad \textbf{F}(\vec{x},\dot{\vec{x}},p_1,p_2,p_3)&=\vec{0}\qquad (\textrm{six scalar ODEs})\\
(D_{\dot{\vec{x}}}\textbf{F})\dot{\vec{s}}_j+(D_{\vec{x}}\textbf{F})\vec{s}_j+\frac{\partial \textbf{F}}{\partial p_j}&=\vec{0}\qquad (\textrm{six scalar ODEs})
\end{cases}.
\end{align}
The boundary conditions for the sensitivities can be obtained by taking the derivative of the boundary conditions for the original unknowns $x_1,\cdots,x_6$. For example, if we decide to use the first type of boundary conditions, then we will equip the above system of ODEs with the following boundary conditions:
\begin{align}
\begin{cases}
&x_1(0^+)=0,\quad x_3(0^+)=0,\quad x_5(0^+)=0,\\
&x_2(T)=0,\quad x_3(T)=0,\quad x_6(T)=\tilde{\lambda}_0,\\
&s_{j1}(0^+)=0,\quad s_{j3}(0^+)=0,\quad s_{j5}(0^+)=0,\\ &s_{j2}(T)=0,\quad s_{j3}(T)=0,\quad s_{j6}(T)=0\,.
\end{cases}
\end{align}
In the above $s_{ji}$ denotes the $i^{th}$ component of the vector $\vec{s}_j$, that is, $\displaystyle s_{ji}(t)=\frac{\partial x_i(t)}{\partial p_j}$. Once we find each $\vec{s}_j$, we can use the following formula to compute the sensitivities of our energy functional:
\begin{equation}
\forall\, 1\leq j\leq 3\qquad \frac{\partial W}{\partial p_j}\sim\int_0^{T}\frac{\partial w}{\partial p_j}+(D_{\vec{x}}w)\vec{s}_j\,dt.
\end{equation}
where $w(\vec{x},p_1,p_2,p_3)=[x_4(t)-c(t)]^2x_1(t)$. Sometimes we denote the integrand in the above integral by $\frac{D w}{D p_j}$.
In order to set up the above system and also compute the sensitivities of the energy functional $W$, in particular we need to compute the matrices $D_{\dot{\vec{x}}}\textbf{F}$, $D_{\vec{x}}\textbf{F}$ and the vector $\frac{\partial \textbf{F}}{\partial p_j}$ for each $1\leq j\leq 3$. The details can be found in Appendix \ref{app:derivatives}.
\item \textbf{Case 2: Dimensionless area parametrization}\\ 
Our original boundary value problem can be written as
\begin{align}\label{april269}
& \textbf{F}(\vec{x},\dot{\vec{x}},C_0,\gamma,\alpha_0)=\vec{0}\\
& \label{eqn-area-bc-1}\textrm{(B.C. I)}\,\,\begin{cases}
x_1(0^+)=0,\quad x_3(0^+)=0,\quad x_5(0^+)=0\\
x_2(\alpha_{max})=0,\quad x_3(\alpha_{max})=0,\quad x_6(\alpha_{max})=\tilde{\lambda}_0
\end{cases},\\
& \textrm{or}\notag\\
& \label{eqn-area-bc-2}\textrm{(B.C. II)}\,\,\begin{cases}
x_1(0^+)=\sin{\theta},\quad x_3(0^+)=\theta\\
x_2(\alpha_{max})=0,\quad x_3(\alpha_{max})=0,\quad x_5(\alpha_{max})=0,\quad x_6(\alpha_{max})=\tilde{\lambda}_0
\end{cases},
\end{align}

where $\theta$ is a fixed angle and $F$ is as follows:
\begin{align}\label{april2610}
\textbf{F}(\vec{x},\dot{\vec{x}},C_0,\gamma,\alpha_0)&=
\begin{bmatrix}
f_1(\vec{x},\dot{\vec{x}},C_0,\gamma,\alpha_0)\\
\vdots\\
f_6(\vec{x},\dot{\vec{x}},C_0,\gamma,\alpha_0)
\end{bmatrix}\notag\\
&=\begin{bmatrix}
\dot{x}_1-\frac{\cos{x_3}}{x_1}\\
\dot{x}_2-\frac{\sin{x_3}}{x_1}\\
\dot{x}_3-2\frac{x_4}{x_1}+\frac{\sin{x_3}}{x_1^2}\\
\dot{x}_4-\frac{x_5}{x_1^2}-\dot{c}\\
\dot{x}_5-2x_4\big[(x_4-c)^2+\frac{x_6}{\tilde{\kappa}}\big]+2(x_4-c)\big[x_4^2+(x_4-\frac{\sin{x_3}}{x_1})^2\big]\\
\dot{x}_6-2\tilde{\kappa}\dot{c}x_4+2\tilde{\kappa}c\dot{c}
\end{bmatrix}.
\end{align}
In the above, the function $c(\alpha)$ represents the spontaneous curvature which, as was discussed in previous sections, is chosen to have one of the following forms:
\begin{align}
& \textrm{Type I:}\qquad  c(\alpha)=0.5R_0C_0\big[1-\tanh{[\gamma(\alpha-\alpha_0)]}\big],\\
 & \textrm{Type II:}\qquad c(\alpha)=-0.5R_0C_0\frac{\alpha-\alpha_0}{\alpha_0}\big[1-\tanh{[\gamma(\alpha-\alpha_0)]}\big].   
\end{align}
The energy functional of interest, that is, the total elastic bending energy of the membrane can be represented by
\begin{align}
W(\vec{x},C_0,\gamma,\alpha_0)\sim\int_0^{\alpha_{max}}[x_4(\alpha)-c(\alpha)]^2\,d\alpha.
\end{align}
Indeed, if we use the change of variable $a=2\pi R_0^2\alpha$, then
\begin{align}
    \int_0^{\alpha_{max}}[h(\alpha)-c(\alpha)]^2\,d\alpha&=\int_0^{\alpha_{max}} R_0^2\big[H(2\pi R_0^2\alpha)-C(2\pi R_0^2\alpha)\big]^2\,d\alpha\notag\\
    &=\int_0^{2\pi R_0^2\alpha_{max}}R_0^2 [H(a)-C(a)]^2\,\frac{d a}{2\pi R_0^2}\notag\\
    &=\frac{1}{2\pi}\int_0^A[H(a)-C(a)]^2\,da\sim W.
\end{align}
Again, in order to ensure that our notations are consistent with what was discussed in \S \ref{sec:over-sensitivity-analysis}, we also introduce $p_1$, $p_2$, and $p_3$ as follows:
\begin{equation}
p_1=C_0,\quad p_2=\gamma,\quad  p_3=\alpha_0.
\end{equation}
Following what was discussed in \S \ref{sec:over-sensitivity-analysis}, for each $1\leq j\leq 3$, in order to find the sensitivities $\displaystyle \vec{s}_j=\frac{\partial \vec{x}}{\partial p_j}$, it is enough to solve the following system of ODEs for the unknowns $\vec{x}(\alpha)$ and $\vec{s}_j(\alpha)$:
\begin{align}
\begin{cases}
\qquad \qquad \textbf{F}(\vec{x},\dot{\vec{x}},p_1,p_2,p_3)&=\vec{0} \qquad (\textrm{six scalar ODEs})\\
(D_{\dot{\vec{x}}}\textbf{F})\dot{\vec{s}}_j+(D_{\vec{x}}\textbf{F})\vec{s}_j+\frac{\partial \textbf{F}}{\partial p_j}&=\vec{0} \qquad (\textrm{six scalar ODEs})
\end{cases}.
\end{align}
The boundary conditions for the sensitivities can be obtained by taking the derivative of the boundary conditions for the original unknowns $x_1,\cdots,x_6$. For example, if we decide to use the second type of boundary conditions, then we will equip the above system of ODEs with the following boundary conditions:
\begin{align}
\begin{cases}
&x_1(0^+)=\sin{\theta},\quad x_3(0^+)=\theta,\\
&x_2(\alpha_{max})=0,\quad x_3(\alpha_{max})=0,\quad x_5(\alpha_{max})=0,\quad x_6(\alpha_{max})=\tilde{\lambda}_0,\\
&s_{j1}(0^+)=0,\quad s_{j3}(0^+)=0,\\ &s_{j2}(\alpha_{max})=0,\quad s_{j3}(\alpha_{max})=0,\quad s_{j5}(\alpha_{max})=0,\quad s_{j6}(\alpha_{max})=0\,.
\end{cases}
\end{align}
As before $s_{ji}$ denotes the $i^{th}$ component of the vector $\vec{s}_j$, that is, $\displaystyle s_{ji}(\alpha)=\frac{\partial x_i(\alpha)}{\partial p_j}$. Once we find each $\vec{s}_j$, we can use the following formula to compute the sensitivities of our energy functional:
\begin{equation}
\forall\, 1\leq j\leq 3\qquad \frac{\partial W}{\partial p_j}\sim\int_0^{\alpha_{max}}\frac{\partial w}{\partial p_j}+(D_{\vec{x}}w)\vec{s}_j\,d \alpha.
\end{equation}
where $w(\vec{x},p_1,p_2,p_3)=[x_4(\alpha)-c(\alpha)]^2$. The integrand in the above integral is sometimes denoted by $\frac{D w}{D p_j}$. 
In order to set up the above system and also compute the sensitivities of the energy functional $W$, in particular we need to compute the matrices $D_{\dot{\vec{x}}}\textbf{F}$, $D_{\vec{x}}\textbf{F}$ and the vector  $\frac{\partial \textbf{F}}{\partial p_j}$ for each $1\leq j\leq 3$. The details can be found in Appendix \ref{app:derivatives}.
\end{itemize}
\begin{remark}
Alternatively, according to the adjoint method, in order to compute the sensitivities of the energy functional $W$, we may use the following formula:
\begin{equation}
\frac{\partial W}{\partial p_j}\sim\int_0^{u_{max}}  (\frac{\partial w}{\partial p_j}-\vec{v}^T\frac{\partial \textbf{F}}{\partial p_j})\,du,
\end{equation}
where, for example, in the case where we use the dimensionless area parametrization with boundary conditions of type (I), $u_{max}$ and $u$ in the above formula stand for $\alpha$ and $\alpha_{max}$, respectively, and the vector function  $\displaystyle \vec{v}=\begin{bmatrix}v_1\\\vdots\\v_6\end{bmatrix}$ can be computed by finding a solution of the following system of ODEs (with 12 scalar unknown functions):
\begin{align*}
\begin{cases}
&\textbf{F}(\vec{x},\dot{\vec{x}},p_1,p_2,p_3)=\vec{0},\\
& \frac{d}{d\alpha}\big[\vec{v}^TD_{\dot{\vec{x}}}\textbf{F}\big]-\vec{v}^TD_{\vec{x}}\textbf{F}=-D_{\vec{x}}w,\\
&
x_1(0^+)=0,\quad x_3(0^+)=0,\quad x_5(0^+)=0,\\
&x_2(\alpha_{max})=0,\quad x_3(\alpha_{max})=0,\quad x_6(\alpha_{max})=\tilde{\lambda}_0,\\
&
v_2(0^+)=0,\quad v_4(0^+)=0,\quad v_6(0^+)=0,\\
&v_1(\alpha_{max})=0,\quad v_4(\alpha_{max})=0,\quad v_5(\alpha_{max})=0.
\end{cases}
\end{align*}
The following proposition justifies the particular choice of boundary conditions for certain components of $\vec{v}$ in the above boundary value problem.
\begin{proposition}
Let $\vec{x}$ and $\textbf{F}$ be as above. Suppose that
\begin{align*}
&
x_1(0^+)=0,\quad x_3(0^+)=0,\quad x_5(0^+)=0,\\
&x_2(\alpha_{max})=0,\quad x_3(\alpha_{max})=0,\quad x_6(\alpha_{max})=\tilde{\lambda}_0.
\end{align*} 
If $\displaystyle \vec{v}(\alpha)=\begin{bmatrix}v_1(\alpha)\\\vdots\\v_6(\alpha)\end{bmatrix}$ satisfies
\begin{align*}
&
v_2(0^+)=0,\quad v_4(0^+)=0,\quad v_6(0^+)=0,\\
&v_1(\alpha_{max})=0,\quad v_4(\alpha_{max})=0,\quad v_5(\alpha_{max})=0.
\end{align*} 
then $\big[\vec{v}^TD_{\dot{\vec{x}}}\textbf{F}\frac{\partial \vec{x}}{\partial p_j}\big]_{\alpha=0}^{\alpha=\alpha_{max}}=0$.
\end{proposition}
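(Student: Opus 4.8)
The plan is to exploit the special structure of $\textbf{F}$ in the dimensionless area formulation, in which each scalar equation $f_i$ is solved for exactly one derivative. First I would compute the matrix $D_{\dot{\vec{x}}}\textbf{F}$ directly from the expression for $\textbf{F}$ in \Cref{april2610}. Inspecting the rows, the only term in $f_i$ involving any component of $\dot{\vec{x}}$ is the leading term $\dot{x}_i$; the quantity $\dot{c}$ that appears in $f_4$ and $f_6$ is the derivative of the \emph{prescribed} function $c(\alpha)$, which does not depend on the unknowns and therefore contributes nothing to $D_{\dot{\vec{x}}}\textbf{F}$. Hence $D_{\dot{\vec{x}}}\textbf{F}=I$, the $6\times 6$ identity matrix, and the boundary term collapses to
\[
\Big[\vec{v}^T D_{\dot{\vec{x}}}\textbf{F}\,\tfrac{\partial \vec{x}}{\partial p_j}\Big]_{\alpha=0}^{\alpha=\alpha_{max}}
= \sum_{i=1}^{6} v_i\,\frac{\partial x_i}{\partial p_j}\Big|_{\alpha=\alpha_{max}}
- \sum_{i=1}^{6} v_i\,\frac{\partial x_i}{\partial p_j}\Big|_{\alpha=0}.
\]

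The second step is a bookkeeping argument showing that every product $v_i\,\partial x_i/\partial p_j$ vanishes at each endpoint. The essential observation is that whenever a component $x_i$ is fixed by the boundary data, its prescribed value is a constant independent of $p_1,p_2,p_3$, so differentiating that boundary condition with respect to $p_j$ gives $\partial x_i/\partial p_j=0$ at that endpoint. At $\alpha=0$ the components $x_1,x_3,x_5$ are prescribed, forcing $\partial x_1/\partial p_j=\partial x_3/\partial p_j=\partial x_5/\partial p_j=0$, while the complementary indices $2,4,6$ are handled by the imposed side conditions $v_2(0^+)=v_4(0^+)=v_6(0^+)=0$; since $\{1,3,5\}\cup\{2,4,6\}=\{1,\dots,6\}$, every term in the sum vanishes at the lower endpoint. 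Symmetrically, at $\alpha=\alpha_{max}$ the prescribed components $x_2,x_3,x_6$ give $\partial x_2/\partial p_j=\partial x_3/\partial p_j=\partial x_6/\partial p_j=0$, and the complementary indices $1,4,5$ are killed by $v_1(\alpha_{max})=v_4(\alpha_{max})=v_5(\alpha_{max})=0$. Subtracting the two endpoint evaluations, each of which is zero, yields the claim.

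There is no genuine analytic obstacle here: the content of the proposition is entirely combinatorial, and the only things to verify carefully are that $D_{\dot{\vec{x}}}\textbf{F}$ really is the identity (in particular that no off-diagonal derivative terms sneak in through $\dot{c}$) and that the index set of prescribed $x$-components and the index set of prescribed-zero $v$-components are \emph{exactly complementary} at each endpoint. This complementarity is precisely how the side conditions on $\vec{v}$ were reverse-engineered: for each endpoint and each index $i$, the boundary data pins down either $x_i$ (which zeroes the sensitivity $\partial x_i/\partial p_j$) or $v_i$ (which zeroes the coefficient), and exactly one of these alternatives holds. I would close by remarking that the identical argument applies to the type (II) boundary conditions and to the arc-length formulation \Cref{may122}, since there too $D_{\dot{\vec{x}}}\textbf{F}$ is the identity and the $v$-conditions are chosen complementarily.
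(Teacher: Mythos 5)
Your proof is correct and follows essentially the same route as the paper's: both reduce the boundary term via $D_{\dot{\vec{x}}}\textbf{F}=\mathrm{Id}$ and then observe that at each endpoint the prescribed components of $\vec{x}$ have vanishing parameter sensitivities while the exactly complementary components of $\vec{v}$ are set to zero. Your explicit remarks on why $\dot{c}$ contributes nothing to $D_{\dot{\vec{x}}}\textbf{F}$ and on the complementarity of the index sets simply make explicit what the paper's computation uses implicitly.
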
 
\begin{proof}
At $\alpha=0^+$ we have
\begin{align*}
\vec{v}^T(0^+)D_{\dot{\vec{x}}}\textbf{F}|_{\alpha=0^+}\frac{\partial \vec{x}(0^+)}{\partial p_j}&=
\begin{bmatrix}
v_1(0^+)& \cdots& v_6(0^+)
\end{bmatrix}
\begin{bmatrix}
\frac{\partial \textbf{F}}{\partial \dot{x}_1}|_{\alpha=0^+}&\cdots& \frac{\partial \textbf{F}}{\partial \dot{x}_6}|_{\alpha=0^+}
\end{bmatrix}
\begin{bmatrix}
0\\
\frac{x_2(0^+)}{\partial p_j}\\
0\\
\frac{x_4(0^+)}{\partial p_j}\\
0\\
\frac{x_6(0^+)}{\partial p_j}
\end{bmatrix}\\
&\hspace{-1.2cm}=\begin{bmatrix}
v_1(0^+)& \cdots& v_6(0^+)
\end{bmatrix}
\bigg(\frac{\partial x_2(0^+)}{\partial p_j} \frac{\partial \textbf{F}}{\partial \dot{x}_2}|_{\alpha=0^+}
+\frac{\partial x_4(0^+)}{\partial p_j} \frac{\partial \textbf{F}}{\partial \dot{x}_4}|_{\alpha=0^+}
+\frac{\partial x_6(0^+)}{\partial p_j} \frac{\partial \textbf{F}}{\partial \dot{x}_6}|_{\alpha=0^+}\bigg)\\
&\hspace{-1.2cm}=\frac{\partial x_2(0^+)}{\partial p_j} v_2(0^+)+\frac{\partial x_4(0^+)}{\partial p_j} v_4(0^+)+\frac{\partial x_6(0^+)}{\partial p_j} v_6(0^+)=0.
\end{align*}
In the above, we used $v_2(0^+)=v_4(0^+)=v_6(0^+)=0$ and the fact that $D_{\dot{\vec{x}}}\textbf{F}$ is the identity matrix.\\\\
Similarly, at $\alpha=\alpha_{max}$, we have
\begin{align*}
&\vec{v}^T(\alpha_{max})D_{\dot{\vec{x}}}\textbf{F}|_{\alpha=\alpha_{max}}\frac{\partial \vec{x}(\alpha_{max})}{\partial p_j}\\
&=
\begin{bmatrix}
v_1(\alpha_{max})& \cdots& v_6(\alpha_{max})
\end{bmatrix}
\begin{bmatrix}
\frac{\partial \textbf{F}}{\partial \dot{x}_1}|_{\alpha=\alpha_{max}}&\cdots& \frac{\partial \textbf{F}}{\partial \dot{x}_6}|_{\alpha=\alpha_{max}}
\end{bmatrix}
\begin{bmatrix}
\frac{x_1(\alpha_{max})}{\partial p_j}\\
0\\
0\\
\frac{x_4(\alpha_{max})}{\partial p_j}\\
\frac{x_5(\alpha_{max})}{\partial p_j}\\
0
\end{bmatrix}\\
&=\begin{bmatrix}
v_1(\alpha_{max})& \cdots& v_6(\alpha_{max})
\end{bmatrix}
\bigg(\frac{\partial x_1(\alpha_{max})}{\partial p_j} \frac{\partial \textbf{F}}{\partial \dot{x}_1}|_{\alpha=\alpha_{max}}
+\frac{\partial x_4(\alpha_{max})}{\partial p_j} \frac{\partial \textbf{F}}{\partial \dot{x}_4}|_{\alpha=\alpha_{max}}
\\&\hspace{6cm}+\frac{\partial x_5(\alpha_{max})}{\partial p_j} \frac{\partial \textbf{F}}{\partial \dot{x}_5}|_{\alpha=\alpha_{max}}
\bigg)\\
&=\frac{\partial x_1(\alpha_{max})}{\partial p_j} v_1(\alpha_{max})+\frac{\partial x_4(\alpha_{max})}{\partial p_j} v_4(\alpha_{max})+\frac{\partial x_5(\alpha_{max})}{\partial p_j} v_5(\alpha_{max})=0.
\end{align*}
In the above, we used $v_1(\alpha_{max})=v_4(\alpha_{max})=v_5(\alpha_{max})=0$ and the fact that $D_{\dot{\vec{x}}}\textbf{F}$ is the identity matrix. 
\end{proof}
\end{remark}

\subsection{Dimensionless arc-length formulation -- Type I spontaneous curvature}\label{sec:s-nondim-arclength-1}
    
    The spontaneous curvature function used is:
    \begin{equation}
    c(t)=0.5C_0R_0\big[1-\tanh\big[\gamma(t-t0)\big]\big].
    \end{equation}
    Note that, for each choice of $C_0$, $c(t)$ is a constant multiple of $F(t)=1-\tanh\big[\gamma(t-t_0)\big]$. So $F(t)$ is the function that determines the shape of the spontaneous curvature. The graph of $F(t)$ is depicted in  \Cref{fig:Figure2}\\\\
    Boundary conditions used for the original unknowns are:
\begin{align}
\begin{cases}
&x_1(0^+)=0,\quad x_3(0^+)=0,\quad x_5(0^+)=0,\\
&x_2(T)=0,\quad x_3(T)=0,\quad x_6(T)=\tilde{\lambda}_0.
\end{cases}
\end{align}
  The boundary conditions for the corresponding sensitivities are all set to be zero.\\
  Our choices for the input parameters are given in \Cref{tab:Table 1}.
    \begin{table}[H]
    \centering
    \begin{tabular}{| >{\centering\arraybackslash}m{2in} | >{\centering\arraybackslash}m{2in} |@{}m{0cm}@{}}
\hline Parameter & Input used\\
\hline $R_0$ $(nm)$ & $80$&\\ 
\hline $C_0$ $(nm^{-1})$ & various values in $[0,0.02]$&\\
\hline $\gamma=\xi*R_0$ & $20$&\\
\hline $t_0=\frac{s_0}{R_0}$ & $1$&\\
\hline $\tilde{\lambda}_0=\lambda_0R_0^2/\kappa_0$  & $12.8$&\\
\hline Domain $=[0,T]$ & $[0,5]$&\\
\hline
\end{tabular}
\caption{Parameters used in the model. The corresponding diagrams are depicted in \Cref{fig:Figure2}.} 
\label{tab:Table 1}
\end{table}
\noindent $R_0$ is chosen such that $t_0=1$. A brief explanation of the choice $s_0=80$ and $T=5$ is given in \S \ref{sec:conclusion}. 
$\gamma$ is chosen such that there will be a sharp transition at $t_0=1$. $\kappa_0$ is chosen to be equal to $\kappa$. (A range of acceptable values for $\kappa$ and $\lambda_0$ can be found in \cite{hassinger2017}.)\\

We organize our results as follows: graphs of energy sensitivities with respect to parameters $C_0$, $\lambda$, and $t_0$ are displayed on the left column of \Cref{fig:Figure2}B, \Cref{fig:Figure2}C, and \Cref{fig:Figure2}D, respectively; the graphs of the dimensionless curvature sensitivities with respect to parameters involved are depicted on the right column of these panels.
 
 Here we make two key observations. 
 First, we notice that for our choice of the type of boundary conditions (B.C. of type I, \Cref{eq:bc1}) and spontaneous curvature function, the energy sensitivity graphs do not intersect the horizontal axis (that is, there is no critical point). 
 As we shall see later, this seems to be in correlation with certain interesting properties of the final shape of the membrane. 
 Second, notice that for our choice of boundary conditions (B.C. of type I, \Cref{eq:bc1}) and spontaneous curvature function (which possesses a sharp transition), there is an abrupt change in curvature sensitivities near the location where the sharp transition in spontaneous curvature function occurs, which, of course, is expected. 
 As we shall soon see, the graphs of curvature sensitivities will smear out if we smooth the transition in the spontaneous curvature function. 

\begin{figure}[H]
    \centering
    \includegraphics[height=5in,width=0.7\textwidth]{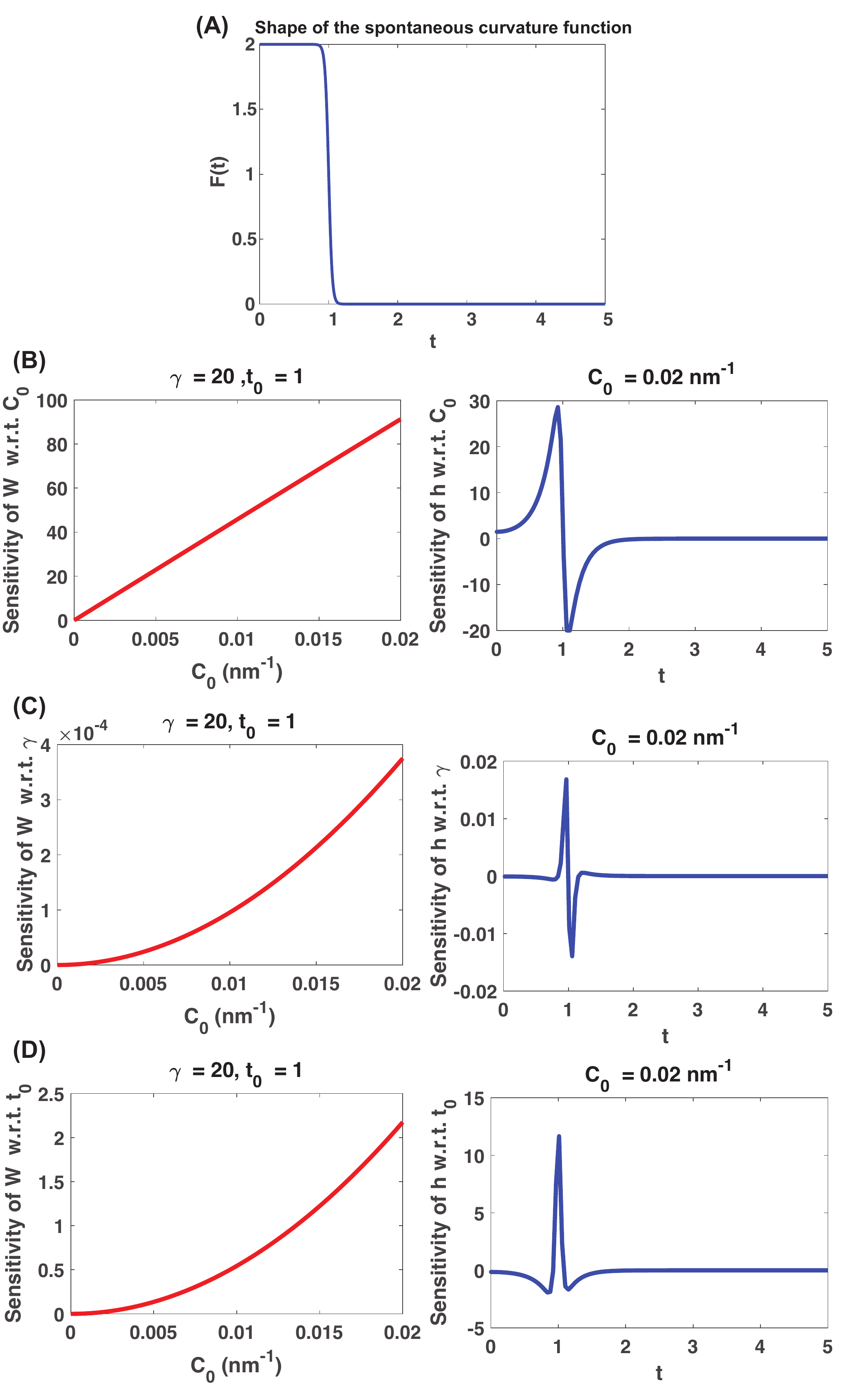}
    \caption{(A) Shape of the spontaneous curvature function with $\gamma=20$ and $t_0=1$.  (B) The red curve on the left depicts $\frac{\partial W}{\partial C_0}|_{(C_0,\gamma,t_0)=(C_0,20,1)}$ for $0\leq C_0\leq 0.02$. The blue curve on the right depicts $\frac{\partial h}{\partial C_0}|_{(C_0,\gamma,t_0)=(0.02,20,1)}$ for $0\leq t\leq 5$. (C) The red curve on the left depicts $\frac{\partial W}{\partial \gamma}|_{(C_0,\gamma,t_0)=(C_0,20,1)}$ for $0\leq C_0\leq 0.02$. The blue curve on the right depicts $\frac{\partial h}{\partial \gamma}|_{(C_0,\gamma,t_0)=(0.02,20,1)}$ for $0\leq t\leq 5$. (D) The red curve on the left depicts $\frac{\partial W}{\partial t_0}|_{(C_0,\gamma,t_0)=(C_0,20,1)}$ for $0\leq C_0\leq 0.02$. The blue curve on the right depicts $\frac{\partial h}{\partial t_0}|_{(C_0,\gamma,t_0)=(0.02,20,1)}$ for $0\leq t\leq 5$. Note that $W$ is a scalar multiple of the total elastic bending energy of the membrane and $h(t)$ is a scalar multiple of the mean curvature at the corresponding points on the membrane.}
    \label{fig:Figure2}
\end{figure}


\subsection{Dimensionless arc-length formulation -- Mollifying type I spontaneous curvature}\label{sec:s-nondim-arclength-molly1}

All parameters are exactly the same as \S \ref{sec:s-nondim-arclength-1} except $\gamma$. 
This time we choose $\gamma$ to be equal to $2.5$ so that $s_0\gamma<3$. 
This choice of $\gamma$ and $s_0$ smooths the transition in the spontaneous curvature function (see \Cref{fig:Figure3}A). As a result we will see that the graphs of curvature sensitivities (diagrams in the second column of \Cref{fig:Figure3}B, C, and D) are smeared out (compared with what was observed in \S \ref{sec:s-nondim-arclength-1}). Notice that this particular change in the shape of the spontaneous curvature function did not have any effect on the number of horizontal intercepts of the energy sensitivities.

\begin{figure}[H]
    \centering
    \includegraphics[height=5in,width=0.7\textwidth]{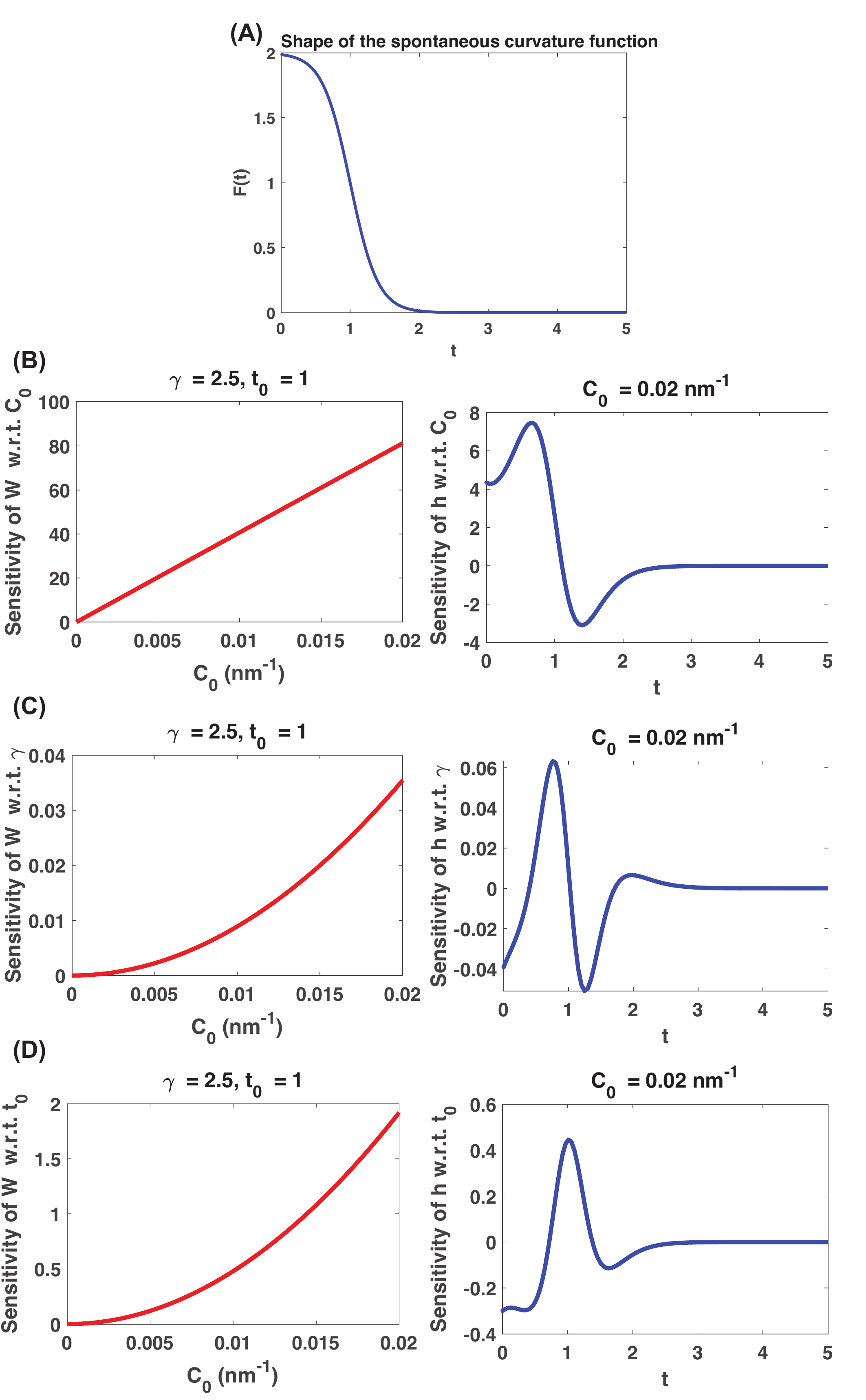}
    \caption{(A) Shape of the spontaneous curvature function with $\gamma=2.5$ and $t_0=1$.  (B) The red curve on the left depicts $\frac{\partial W}{\partial C_0}|_{(C_0,\gamma,t_0)=(C_0,2.5,1)}$ for $0\leq C_0\leq 0.02$. The blue curve on the right depicts $\frac{\partial h}{\partial C_0}|_{(C_0,\gamma,t_0)=(0.02,2.5,1)}$ for $0\leq t\leq 5$. (C) The red curve on the left depicts $\frac{\partial W}{\partial \gamma}|_{(C_0,\gamma,t_0)=(C_0,2.5,1)}$ for $0\leq C_0\leq 0.02$. The blue curve on the right depicts $\frac{\partial h}{\partial \gamma}|_{(C_0,\gamma,t_0)=(0.02,2.5,1)}$ for $0\leq t\leq 5$. (D) The red curve on the left depicts $\frac{\partial W}{\partial t_0}|_{(C_0,\gamma,t_0)=(C_0,2.5,1)}$ for $0\leq C_0\leq 0.02$. The blue curve on the right depicts $\frac{\partial h}{\partial t_0}|_{(C_0,\gamma,t_0)=(0.02,2.5,1)}$ for $0\leq t\leq 5$. Note that $W$ is a scalar multiple of the total elastic bending energy of the membrane and $h(t)$ is a scalar multiple of the mean curvature at the corresponding points on the membrane.}
    \label{fig:Figure3}
\end{figure}


\subsection{Dimensionless arc-length formulation -- Type II spontaneous curvature}\label{sec:s-nondim-arclength-2}
The spontaneous curvature function used is:
    \begin{equation}
    c(t)=-0.5C_0R_0\frac{t-t_0}{t_0}\big[1-\tanh\big[\gamma(t-t0)\big]\big].
    \end{equation}
    Note that for each choice of $C_0$, $c(t)$ is a constant multiple of $F(t)=-\frac{t-t_0}{t_0}\big[1-\tanh{[\gamma(t-t_0)]}\big]$. The graph of $F(t)$ is depicted in  \Cref{fig:Figure6}A. Boundary conditions used for the original unknowns are:
\begin{align}
\begin{cases}
&x_1(0^+)=\sin{(0.9\pi)},\quad  x_3(0^+)=0.9\pi,\\
&x_2(T)=0,\quad x_3(T)=0,\quad x_5(T)=0,\quad x_6(T)=0.
\end{cases}
\end{align}
  The boundary conditions for the corresponding sensitivities are all set to be zero.\\
  Our choices for the input parameters are given in \Cref{tab:Table 3}.
    \begin{table}[H]
    \centering
    \begin{tabular}{| >{\centering\arraybackslash}m{2in} | >{\centering\arraybackslash}m{2in} |@{}m{0cm}@{}}
\hline Parameter & Input used\\
\hline $R_0$ $(nm)$ & $200$&\\ 
\hline $C_0$ $(nm^{-1})$ & various values in $[0, \frac{4.5}{1000}]$ &\\
\hline $\gamma=\xi R_0$ & $30$&\\
\hline $t_0=\frac{s_0}{R_0}$ & $30$&\\
\hline Domain $=[0,T]$ & $[0,100]$&\\
\hline
\end{tabular}
\caption{Parameters used in the model. The corresponding diagrams are depicted in \Cref{fig:Figure6}.}
\label{tab:Table 3}
\end{table}
\noindent This particular choice of parameters is in agreement with \cite{yuan2020}.

We organize our results as follows: graphs of energy sensitivities with respect to parameters $C_0$, $\lambda$, and $t_0$ are displayed on the left column of \Cref{fig:Figure6}B, C, and D, respectively; the graphs of the dimensionless curvature sensitivities with respect to parameters involved are depicted on the right column of these panels. We make two observations. First, notice that for our choice of the type of boundary conditions (B.C. of type II, \Cref{eq:bc2}) and spontaneous curvature function, the energy sensitivity graph with respect to $C_0$ indeed intersects the horizontal axis. As we shall see, this seems to be in correlation with certain interesting properties of the final shape of the membrane. Second, notice that for our choice of boundary conditions and spontaneous curvature function (which possesses a sharp transition), the graphs of curvature sensitivities have sharp bends near the location where the sharp transition in spontaneous curvature function occurs. As we shall soon see, the graphs of curvature sensitivities will be smoother near $t_0$ if we smooth the sharp transition in the spontaneous curvature function.

\begin{figure}[H]
    \centering
    \includegraphics[height=5in,width=0.7\textwidth]{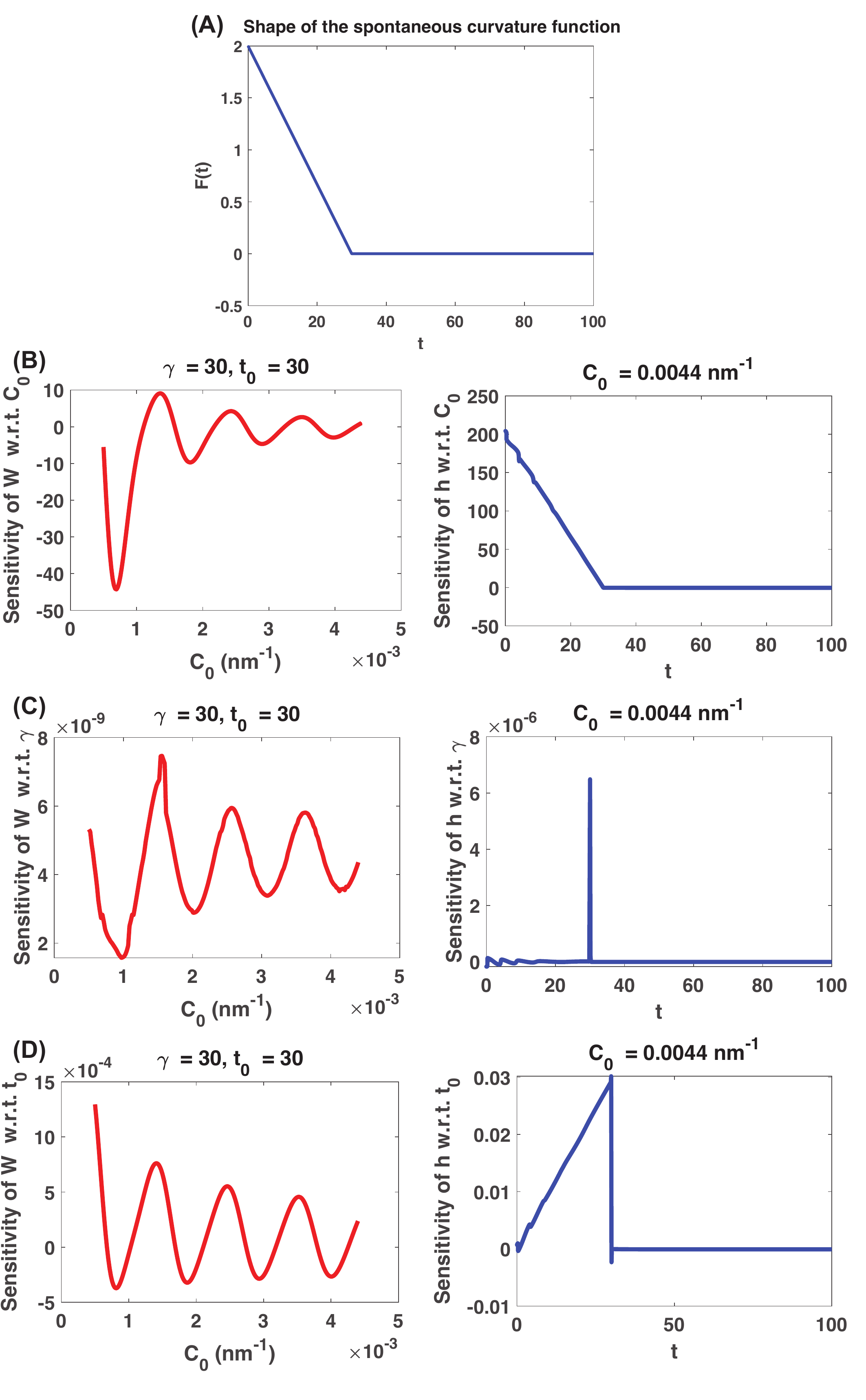}
    \caption{(A) Shape of the spontaneous curvature function with $\gamma=30$ and $t_0=30$.  (B) The red curve on the left depicts $\frac{\partial W}{\partial C_0}|_{(C_0,\gamma,t_0)=(C_0,30,30)}$ for $0\leq C_0\leq 0.0044$. The blue curve on the right depicts $\frac{\partial h}{\partial C_0}|_{(C_0,\gamma,t_0)=(0.0044,30,30)}$ for $0\leq t\leq 100$. (C) The red curve on the left depicts $\frac{\partial W}{\partial \gamma}|_{(C_0,\gamma,t_0)=(C_0,30,30)}$ for $0\leq C_0\leq 0.0044$. The blue curve on the right depicts $\frac{\partial h}{\partial \gamma}|_{(C_0,\gamma,t_0)=(0.0044,30,30)}$ for $0\leq t\leq 100$. (D) The red curve on the left depicts $\frac{\partial W}{\partial t_0}|_{(C_0,\gamma,t_0)=(C_0,30,30)}$ for $0\leq C_0\leq 0.0044$. The blue curve on the right depicts $\frac{\partial h}{\partial t_0}|_{(C_0,\gamma,t_0)=(0.0044,30,30)}$ for $0\leq t\leq 100$. Note that $W$ is a scalar multiple of the total elastic bending energy of the membrane and $h(t)$ is a scalar multiple of the mean curvature at the corresponding points on the membrane.}
    \label{fig:Figure6}
\end{figure}

\subsection{Dimensionless arc-length formulation -- Mollifying type II spontaneous curvature}\label{sec:s-nondim-arclength-molly2}

Almost all parameters are the same those used in \S \ref{sec:s-nondim-arclength-2} except for the value chosen for $\gamma$. 
This time we choose $\gamma$ to be equal to $0.08$ such that $s_0\gamma<3$. This choice of $\gamma$ and $s_0$ further smooths the transition in the spontaneous curvature function (see \Cref{fig:Figure8}A). As a result we will see that the graphs of curvature sensitivities (diagrams in the second column of \Cref{fig:Figure8}B, C, and D) are smoother near $t_0=30$ (compared with what was observed in \S \ref{sec:s-nondim-arclength-2}). 
\begin{figure}[H]
    \centering
    \includegraphics[height=5in,width=0.7\textwidth]{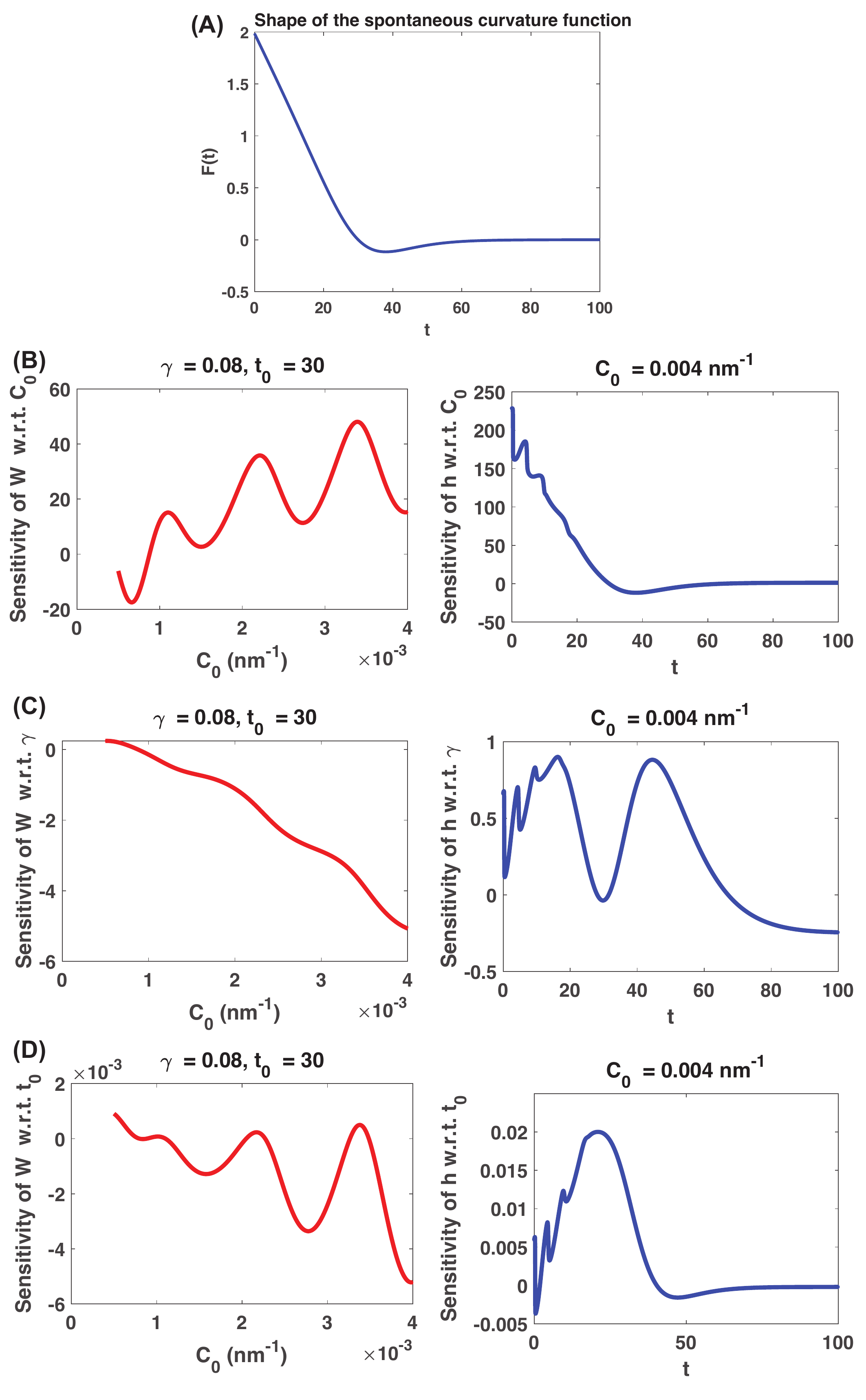}
    \caption{(A) Shape of the spontaneous curvature function with $\gamma=0.08$ and $t_0=30$.  (B) The red curve on the left depicts $\frac{\partial W}{\partial C_0}|_{(C_0,\gamma,t_0)=(C_0,0.08,30)}$ for $0\leq C_0\leq 0.004$. The blue curve on the right depicts $\frac{\partial h}{\partial C_0}|_{(C_0,\gamma,t_0)=(0.004,0.08,30)}$ for $0\leq t\leq 100$. (C) The red curve on the left depicts $\frac{\partial W}{\partial \gamma}|_{(C_0,\gamma,t_0)=(C_0,0.08,30)}$ for $0\leq C_0\leq 0.004$. The blue curve on the right depicts $\frac{\partial h}{\partial \gamma}|_{(C_0,\gamma,t_0)=(0.004,0.08,30)}$ for $0\leq t\leq 100$. (D) The red curve on the left depicts $\frac{\partial W}{\partial t_0}|_{(C_0,\gamma,t_0)=(C_0,0.08,30)}$ for $0\leq C_0\leq 0.004$. The blue curve on the right depicts $\frac{\partial h}{\partial t_0}|_{(C_0,\gamma,t_0)=(0.004,0.08,30)}$ for $0\leq t\leq 100$. Note that $W$ is a scalar multiple of the total elastic bending energy of the membrane and $h(t)$ is a scalar multiple of the mean curvature at the corresponding points on the membrane.}
    \label{fig:Figure8}
\end{figure}

\section{Results and conclusions}\label{sec:conclusion}
\subsection{Insights obtained from sensitivity analyses on membrane shapes}\label{sec:s-insights}
\begin{itemize}
    \item \textbf{Remarks on the size of the domain.}\\
    As discussed earlier, we use a MATLAB BVP solver to approximate the solution to our system of ODEs. 
    For this reason, the dimensionless size of the domain (that is, the interval on which we want to solve the system), the mesh size, and the initial guess for solution (which is given to the solver as an input) can play key roles in performance of the method and accuracy of the results. 
    In each of our numerical experiments we normally start with dividing the interval into about 100 subintervals and then use a finer mesh if we run into trouble.
    As expected, we observed that for larger domain size, finer partitions are needed for a good performance. 
    In our simulations we adhered to the following rules with regard to the total size of the domain:
    \begin{enumerate}
        \item Dimensionless Arc-length Formulation: Total size is taken to be 2-5 times the size of the coated region.
        \item Dimensionless Area Formulation: Total size is taken to be 4-25 times the size of the coated region.
    \end{enumerate}
    It is important to mention that there are experimental measurements that can provide an estimate of the size of the coat \cite{nickels2017vivo}. 
    In some of our simulations, we used $a_0=20106$ $nm^2$ as an estimate of the area of the coat \cite{hassinger2017}.
    The corresponding dimensionless area is obtained by first choosing a number $R_0$ and then computing $\frac{a_0}{2\pi R_0^2}$. 
    We mention in passing that, although theoretically the value chosen for $R_0$ should not have any effect on the solution of the system, in practice, when using a numerical method to solve the problem, the value of $R_0$ can affect the performance of the numerical solver. 
    Our chosen values for $R_0$ can be found in \Cref{tab:Table 1,tab:Table 2,tab:Table 3,tab:Table 4} displayed in \S \ref{sec:numer-results} and Appendix \ref{app:numericalarea}.\\\\
     We know that the area of a spherical cap corresponding to arc-length $s$ is approximately equal to $\pi s^2$ (assuming $s<<$radius of the sphere). For this reason, it is not completely unreasonable to use the equation $\pi s_0^2=20106$ to come up with an estimate of the length corresponding to the coated region (which gives $s_0\approx 80$ $nm$).

    \item \textbf{Should we expect the area formulation and the arc-length formulation produce the exact same results?}\\
    Let's assume we have fixed the values of $a_0$, $s_0$, $C_0$, and $\xi$. An important question is the following: Is it reasonable to expect that we should obtain the exact same answers using arc-length formulation and area formulation?  
    A little bit of deliberation tells us that the answer is no! For one thing, what the exact relationship between $a_0$ and $s_0$ should be is not known in advance. More importantly, although the spontaneous curvature function used for the area formulation has the same form as the spontaneous curvature function used in arc-length formulation, the area spontaneous curvature used in our simulations is not exactly the function obtained by transforming $C(s)=0.5C_0F(s)$ into a function of $a$ using the relationship $a=g(s)=2\pi\int_0^sr(u)\,du$. As discussed in Remark \ref{remmay242}, the area spontaneous curvature function that would have produced the same answer would have been $\hat{C}(a)=0.5C_0F(g^{-1}(a))$. For example, in the case where $F(s)=[1-\tanh[\xi(s-s_0)]]$, we would obtain $\hat{C}(a)=0.5C_0[1-\tanh[\xi(g^{-1}(a)-s_0)]]$ which, in part due to the presence of the function $g^{-1}$, does not necessarily have the same form as the form that is actually prescribed for $C(a)$ in our simulations.
    
    \item \textbf{On the importance of boundary conditions and the value of $\boldsymbol{\tilde{\lambda}}_0$.}\\
    As it was noted in previous sections, we ran our simulations with two different sets of boundary conditions representing distinct physical constraints/assumptions. 
    Unfortunately, as opposed to the case of initial value problems, there is no general mathematical theory that can be used to ensure the existence and uniqueness of solutions to boundary value problems such as the one studied in this work. 
    One thing that became clear to us was that the performance of the MATLAB BVP solver `bvp4c' was highly sensitive to the chosen boundary conditions, in particular to the value of $\tilde{\lambda}_0$. 
    In fact, in some cases, a small change in the value of $\tilde{\lambda}_0$ can result in the appearance of linear algebraic systems with singular coefficient matrices in the process of numerically solving the ODE system. 
    At this point, it is not clear whether this is a theoretical issue related to existence of solutions, or whether this should be explained by exploring the stability properties of the numerical method used to solve the system.
    
    \item \textbf{A cleverly chosen spontaneous curvature can smear out solution sensitivities.}\\
    As we mentioned before, whether or not the spontaneous curvature function is mollifying can affect the solution sensitivities. 
    Our simulations provide numerical evidence for the conjecture that the smoother the transition between nonzero part of $F(u)$ to the zero part of $F(u)$ is, the less likely it will be to have abrupt jumps in the graphs of solution sensitivities. 
    Notice that the shape of the function $F$ over the transition region can be tuned by cleverly choosing the parameters $\gamma$ and $t_0$ (or $\alpha_0$). 
    
    \item \textbf{Is there any correlation between sensitivity diagrams and the final shape of the membrane?}\\
    One motivation of this work was to gain more insight into the circumstances that would result in bud-shaped membranes versus those that would give pearl-shaped membranes. 
    To that end, we performed various numerical experiments to see whether we could find evidence indicating positive or negative correlation between input data of the problem (such as type of the spontaneous curvature, parameters used in the expression of the spontaneous curvature, and type of the boundary conditions) and the final shape of the membrane. \\\\
    The following observations/conjectures are in agreement with all of our results, parts of which are depicted in \Cref{fig:Figure10}.
    \begin{enumerate}
        
        \item Our results provide numerical evidence for the conjecture that there might be a connection between the behavior of the energy sensitivity with respect to $C_0$ and the appearance of turning points in the generating curve of the surface of the membrane and formation of pearls (loops).  In particular, as it becomes evident by comparing the diagrams displayed in panels A, B, C, D, and G of \Cref{fig:Figure10} with those displayed in panels E, F, and H,  we observed that when $W$ did not have any critical point (as a function of $C_0$), that is, when the energy sensitivity function with respect to $C_0$ did not intersect the horizontal axis, no loops were formed.

        \item Previously we observed that using a mollifying spontaneous curvature function can smear out the solution sensitivities. However, the type of spontaneous curvature function (that is type I or type II) or whether the function is mollifying or not, does not appear to preclude the possibility of formation of  pearls. 
        
        \item For a fixed domain size and coat size, it is more likely to see an energy sensitivity function with oscillatory behavior about the horizontal axis (indicating existence of zeros) when we use the second type of boundary conditions. Indeed, in our experiments, every time we used the first set of boundary conditions, we noticed that the resulting energy sensitivity function (as a function of $C_0$) will not intersect the horizontal axis, and subsequently no pearls were formed. 
        \end{enumerate}

\end{itemize}

\begin{sidewaysfigure}
    \centering
    \includegraphics[scale=0.45]{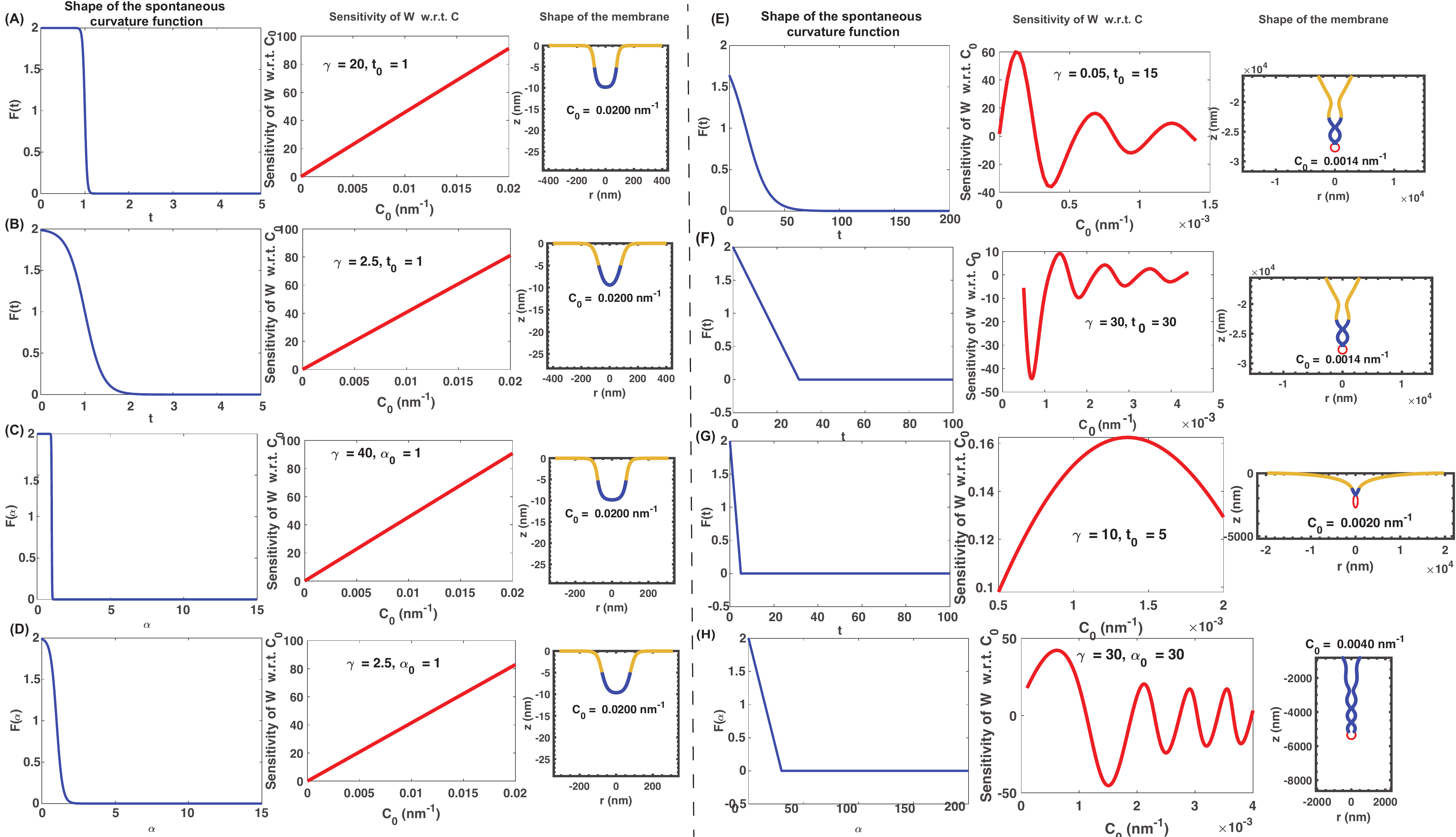}
    \centering
    \caption{(A) Arc-length Formulation, Type I Spontaneous Curvature, Type I B.C. 
    (B) Arc-length Formulation, Mollifying Type I Spontaneous Curvature, Type I B.C. 
    (C) Area Formulation, Type I Spontaneous Curvature, Type I B.C.
    (D) Area Formulation, Mollifying Type I Spontaneous Curvature, Type I B.C.
    (E) Arc-length Formulation, Type I Spontaneous Curvature, Type II B.C.
    (F) Arc-length Formulation, Type II Spontaneous Curvature, Type II B.C.
    (G) Arc-length Formulation, Type II Spontaneous Curvature, Type II B.C.
    (H) Area Formulation, Type II Spontaneous Curvature, Type II B.C.}
    \label{fig:Figure10}
\end{sidewaysfigure}

\subsection{Examining a conjecture related to the `pearling' transition} \label{sec:disprove}

In our discussions with researchers working on the subject, we noticed that some implicitly believe in the conjecture that ``the slope of the transition part of the spontaneous curvature function is a key factor in whether or not pearls will be formed; in particular, there is a correlation between formation of pearls and not having sharp transitions (big slopes) in the spontaneous curvature function used in the numerical solution of the boundary value problem". \\\\
The purpose of this section is to provide a simple argument that disproves the validity of the above conjecture in the generality stated above. Indeed, in what follows we will show that there exist spontaneous curvature functions with very large slopes on their transition regions that ultimately result in formation of pearls.
To be concrete, we focus on the dimensionless arc-length parametrization, however, an analogous argument can be applied to the dimensionless area parametrization.\\\\
Let $(C_0, R_0,\gamma, t_0)$ be a set of parameters with $\gamma t_0>3$ that results in formation of pearls on the domain $[0,T]$. Note that since $\gamma t_0>3$, the value of the spontaneous curvature function at $0$ is approximately $C_0R_0$ and it decreases to near zero over the interval $[0, t_0+\frac{3}{\gamma}]$. As we discussed  in \S \ref{sec:axsi-param}, if $x(t)$ and $y(t)$ are the first two components of the corresponding solution (note that $R_0x(t)$ and $R_0y(t)$ are the coordinates of the generating curve of the surface of the membrane), then for any constant $\eta>0$, the functions $\frac{1}{\eta}x(\eta t)$ and $\frac{1}{\eta}y(\eta t)$ will be the first two components of the solution on $[0,\frac{T}{\eta}]$ using parameters  $(C_0,\eta R_0,\eta \gamma, \frac{t_0}{\eta})$. In particular, the shape of the solution curve (and the corresponding surface) using these new parameters will be the same as the shape of the solution curve (and the corresponding surface) using the original parameters. If one has pearls, then the other will also have pearls. Now note that $(\eta \gamma)\frac{t_0}{\eta}=\gamma t_0>3$ and so the value of the corresponding spontaneous curvature at $0$ is approximately  $(C_0)(\eta R_0)$ and the spontaneous curvature function decreases to nearly zero over the interval $[0, \frac{t_0}{\eta}+\frac{3}{\eta \gamma}]$. We can set the number $\eta$ to be as large as we want. The larger the $\eta$, the smaller the interval of transition will be and the larger the starting value $\eta C_0R_0$ will be. That is, for a very large $\eta$, the corresponding spontaneous curvature function must decrease from the huge value $\eta C_0R_0$ to zero over the very small interval $[0, \frac{t_0}{\eta}+\frac{3}{\eta \gamma}]$, which means it will have a huge slope. Nevertheless, since the original parameters resulted in the formation of pearls, this new spontaneous curvature function will also result in the formation of pearls.

\subsection{Concluding remarks}\label{sec:s-future-directions}
In this study, we conducted sensitivity analysis on the well-known Helfrich model for lipid bilayer bending in the context of the spontaneous curvature function. 
We observed some interesting phenomena in our numerical experiments that relate the sensitivity of the energy with respect to the free parameters in the spontaneous curvature function to the shape of the membrane.
Given the wide usage of the Helfrich model for simulating membrane bending phenomena that have been reported experimentally, our approach of using sensitivity analysis can provide some insight into how one can design input functions and parameters to this system of ODEs.
Our experiments also led us to make certain new, arguably nonobvious, conjectures about the behavior of the solution.
Clearly even if results of numerical experiments using a million different sets of inputs display positive correlation between certain output variables, that does not necessarily mean that theoretically there should be positive correlation between the output variables regardless of what the inputs are. 
Nevertheless, such results may help us make conjectures that were not immediately obvious from the outset, and this is in fact a major way that science progresses.  
This work by no means should be viewed as the final word on the parametric sensitivity analysis of the shape equations in Helfrich energy model. 
Rather, we suggest that it is merely a first step toward better understanding the role of parameters involved in shape equations, particularly in the context of numerical simulations. 
Indeed, future directions include identifying suitable `bump' functions for the spontaneous curvature function and extension of these methods to solutions in general coordinates. 

\section*{Acknowledgments}
The authors would like to thank Haleh Alimohamadi, Jennifer Fromm, Christopher Lee, Can Uysalel, Ritvik Vasan, Cuncheng Zhu, and other members of the Rangamani group for critical discussions and feedback on the manuscript. M.H. was supported in part by NSF DMS/CM Award 1620366 and NSF DMS/MB Award 1934411. This work was funded in part by R01GM132106 from the National Institutes of Health and FA9550-18-1-0051 from the Air Force Office of Scientific Research to P.R.

\newpage
\begin{appendices}
\section{Some numerical results for dimensionless area parametrization}\label{app:numericalarea}
In this appendix we will present some of our numerical results obtained using the dimensionless area parametrization.
\subsection{Dimensionless area formulation -- Type I spontaneous curvature}\label{sec:s-nondim-area-1}

    The spontaneous curvature function used is:
    \begin{equation}
    c(\alpha)=0.5C_0R_0\big[1-\tanh\big[\gamma(\alpha-\alpha_0)\big]\big].
    \end{equation}
     Note that for each choice of $C_0$, $c(\alpha)$ is a constant multiple of $F(\alpha)=1-\tanh\big[\gamma(\alpha-\alpha_0)\big]$. So $F(\alpha)$ is the function that determines the shape of the spontaneous curvature. The graph of $F(\alpha)$ is depicted in  \Cref{fig:Figure4}.\\\\
    Boundary conditions used for the original unknowns are:
\begin{align}
\begin{cases}
&x_1(0^+)=0,\quad x_3(0^+)=0,\quad x_5(0^+)=0,\\
&x_2(\alpha_{max})=0,\quad x_3(\alpha_{max})=0,\quad x_6(\alpha_{max})=\tilde{\lambda}_0.
\end{cases}
\end{align}
  The boundary conditions for the corresponding sensitivities are all set to be zero.\\
  Our choices for the input parameters are given in \Cref{tab:Table 2}.
    \begin{table}[H]
    \centering
    \begin{tabular}{| >{\centering\arraybackslash}m{2in} | >{\centering\arraybackslash}m{2in} |@{}m{0cm}@{}}
\hline Parameter & Input used\\
\hline $R_0$ $(nm)$ & $\frac{400}{\sqrt{50}}$&\\ 
\hline $C_0$ $(nm^{-1})$ & various values in $[0, 0.02]$&\\
\hline $\gamma=\xi*2\pi R_0^2$ & $40$&\\
\hline $\alpha_0=\frac{a_0}{2\pi R_0^2}$ & $1$&\\
\hline $\tilde{\lambda}_0=\lambda_0R_0^2/\kappa_0$ & $6.4$&\\
\hline Domain $=[0,\alpha_{max}]$ & $[0,15]$&\\
\hline
\end{tabular}
\caption{Parameters used in the model. The corresponding diagrams are depicted in \Cref{fig:Figure4}. }
\label{tab:Table 2}
\end{table}
\noindent $R_0$ is chosen such that $\alpha_0=1$. A brief explanation of the choice $a_0=2\pi R_0^2\alpha_0\approx 20106$ and $\alpha_{max}=15$ is given in \S \ref{sec:conclusion}. $\gamma$ is chosen such that there will be a sharp transition at $\alpha_0=1$.\\

We organize our results as follows: graphs of energy sensitivities with respect to parameters $C_0$, $\lambda$, and $t_0$ are displayed on the left column of \Cref{fig:Figure4}B, C, and D, respectively; the graphs of the dimensionless curvature sensitivities with respect to parameters involved are depicted on the right column of these panels. Again we make two key observations. First, notice that for our choice of the type of boundary conditions (B.C. of type I, \Cref{eqn-area-bc-1}) and spontaneous curvature function, the energy sensitivity graphs do not intersect the horizontal axis. Second, notice that for our choice of boundary conditions and spontaneous curvature function (which possesses a sharp transition), there is an abrupt change in curvature sensitivities near the location where the sharp transition in spontaneous curvature function occurs. The graphs of curvature sensitivities will smear out if we smooth the transition in the  spontaneous curvature function. 


\begin{figure}[H]
    \centering
    \includegraphics[height=5in,width=0.7\textwidth]{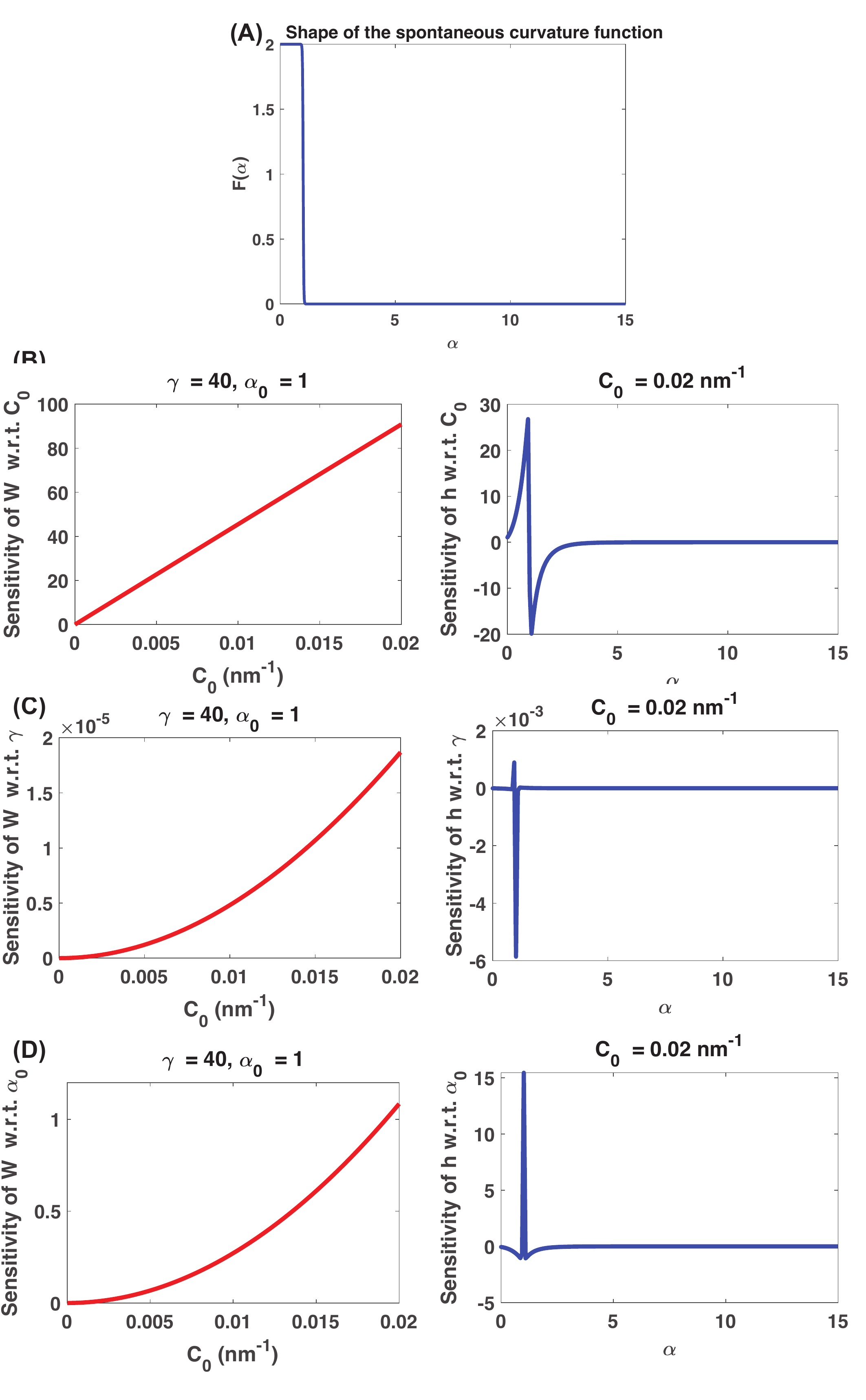}
    \caption{(A) Shape of the spontaneous curvature function with $\gamma=40$ and $\alpha_0=1$.  (B) The red curve on the left depicts $\frac{\partial W}{\partial C_0}|_{(C_0,\gamma,\alpha_0)=(C_0,40,1)}$ for $0\leq C_0\leq 0.02$. The blue curve on the right depicts $\frac{\partial h}{\partial C_0}|_{(C_0,\gamma,\alpha_0)=(0.02,40,1)}$ for $0\leq \alpha\leq 15$. (C) The red curve on the left depicts $\frac{\partial W}{\partial \gamma}|_{(C_0,\gamma,\alpha_0)=(C_0,40,1)}$ for $0\leq C_0\leq 0.02$. The blue curve on the right depicts $\frac{\partial h}{\partial \gamma}|_{(C_0,\gamma,\alpha_0)=(0.02,40,1)}$ for $0\leq \alpha\leq 15$. (D) The red curve on the left depicts $\frac{\partial W}{\partial t_0}|_{(C_0,\gamma,\alpha_0)=(C_0,40,1)}$ for $0\leq C_0\leq 0.02$. The blue curve on the right depicts $\frac{\partial h}{\partial \alpha_0}|_{(C_0,\gamma,t_0)=(0.02,40,1)}$ for $0\leq \alpha\leq 15$. Note that $W$ is a scalar multiple of the total elastic bending energy of the membrane and $h(\alpha)$ is a scalar multiple of the mean curvature at the corresponding points on the membrane.}
    \label{fig:Figure4}
\end{figure}

\subsection{Dimensionless area formulation - Mollifying type I spontaneous curvature}\label{sec:s-nondim-area-molly1}

All parameters are exactly the same as in \S \ref{sec:s-nondim-area-1} except $\gamma$. This time we choose $\gamma$ to be equal to $2.5$ such that $\alpha_0\gamma<3$. This choice of $\gamma$ and $\alpha_0$ smooths the transition in the spontaneous curvature function (see \Cref{fig:Figure5}A). As a result we will see that the graphs of curvature sensitivities (diagrams in the second column of \Cref{fig:Figure5}B,C, and D) are smeared out (compared with what was observed in \S \ref{sec:s-nondim-area-1}). Notice that this particular change in the shape of the spontaneous curvature function did not have any effect on the number of horizontal intercepts of the energy sensitivities.

\begin{figure}[H]
    \centering
    \includegraphics[height=5in,width=0.7\textwidth]{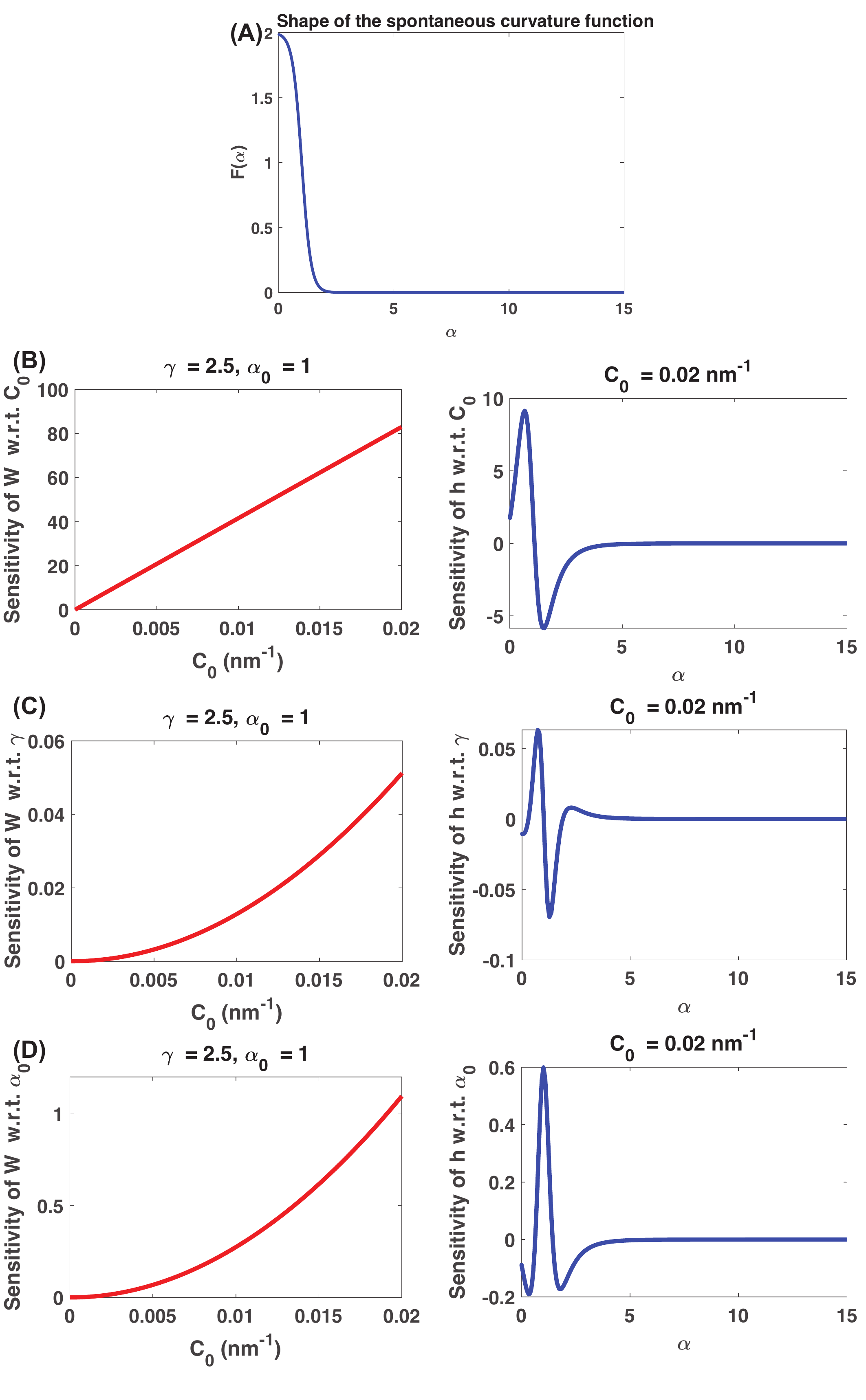}
    \caption{(A) Shape of the spontaneous curvature function with $\gamma=2.5$ and $\alpha_0=1$.  (B) The red curve on the left depicts $\frac{\partial W}{\partial C_0}|_{(C_0,\gamma,\alpha_0)=(C_0,2.5,1)}$ for $0\leq C_0\leq 0.02$. The blue curve on the right depicts $\frac{\partial h}{\partial C_0}|_{(C_0,\gamma,\alpha_0)=(0.02,2.5,1)}$ for $0\leq \alpha\leq 15$. (C) The red curve on the left depicts $\frac{\partial W}{\partial \gamma}|_{(C_0,\gamma,\alpha_0)=(C_0,2.5,1)}$ for $0\leq C_0\leq 0.02$. The blue curve on the right depicts $\frac{\partial h}{\partial \gamma}|_{(C_0,\gamma,\alpha_0)=(0.02,2.5,1)}$ for $0\leq \alpha\leq 15$. (D) The red curve on the left depicts $\frac{\partial W}{\partial \alpha_0}|_{(C_0,\gamma,\alpha_0)=(C_0,2.5,1)}$ for $0\leq C_0\leq 0.02$. The blue curve on the right depicts $\frac{\partial h}{\partial \alpha_0}|_{(C_0,\gamma,\alpha_0)=(0.02,2.5,1)}$ for $0\leq \alpha\leq 15$. Note that $W$ is a scalar multiple of the total elastic bending energy of the membrane and $h(\alpha)$ is a scalar multiple of the mean curvature at the corresponding points on the membrane.}
    \label{fig:Figure5}
\end{figure}

\subsection{Dimensionless area formulation -- Type II spontaneous curvature}\label{sec:s-nondim-area-2}
The spontaneous curvature function used is:
    \begin{equation}
    c(\alpha)=-0.5C_0R_0\frac{\alpha-\alpha_0}{\alpha_0}\big[1-\tanh\big[\gamma(\alpha-\alpha_0)\big]\big].
    \end{equation}
    Note that for each choice of $C_0$, $c(t)$ is a constant multiple of $F(\alpha)=-\frac{\alpha-\alpha_0}{\alpha_0}\big[1-\tanh\big[\gamma(\alpha-\alpha_0)\big]\big]$. So $F(\alpha)$ is the function that determines the shape of the spontaneous curvature. The graph of $F(\alpha)$ is depicted in  \Cref{fig:Figure7}A.\\\\
    Boundary conditions used for the original unknowns are:
\begin{align}
\begin{cases}
&x_1(0^+)=\sin{(0.3\pi)},\quad  x_3(0^+)=0.3\pi,\\
&x_2(\alpha_{max})=0,\quad x_3(\alpha_{max})=0,\quad x_5((\alpha_{max})=0,\quad x_6((\alpha_{max})=0.
\end{cases}
\end{align}
  The boundary conditions for the corresponding sensitivities are all set to be zero.\\
  Our choices for the input parameters are given in \Cref{tab:Table 4}.
    \begin{table}[H]
    \centering
    \begin{tabular}{| >{\centering\arraybackslash}m{2in} | >{\centering\arraybackslash}m{2in} |@{}m{0cm}@{}}
\hline Parameter & Input used\\
\hline $R_0$ $(nm)$& $200$&\\ 
\hline $C_0$ $(nm^{-1})$ &various values in $[0,\frac{4}{1000}]$ &\\
\hline $\gamma=\xi (2\pi R_0^2)$ & $30$&\\
\hline $\alpha_0=\frac{a_0}{2\pi R_0^2}$ & $30$&\\
\hline Domain $=[0,\alpha_{max}]$ & $[0,200]$&\\
\hline
\end{tabular}
\caption{Parameters used in the model. The corresponding diagrams are depicted in \Cref{fig:Figure7}.}
\label{tab:Table 4}
\end{table}

\noindent This particular choice of parameters is in agreement with \cite{yuan2020}.

We organize our results as follows: graphs of energy sensitivities with respect to parameters $C_0$, $\lambda$, and $t_0$ are displayed on the left column of \Cref{fig:Figure8}B, C, and D, respectively; the graphs of the dimensionless curvature sensitivities with respect to parameters involved are depicted on the right column of these panels. We make two observations. First, notice that for our choice of the type of boundary conditions (B.C. of type II, \Cref{eqn-area-bc-2}) and spontaneous curvature function, the energy sensitivity graph with respect to $C_0$ indeed intersects the horizontal axis. Second, notice that for our choice of boundary conditions and spontaneous curvature function (which possesses a sharp transition), the graphs of curvature sensitivities have sharp bends particularly near the location where the sharp transition in spontaneous curvature function occurs. The graphs of curvature sensitivities will be smoother near $\alpha_0$ if we smooth the sharp transition in the spontaneous curvature function.

\begin{figure}[H]
    \centering
    \includegraphics[height=5in,width=0.7\textwidth]{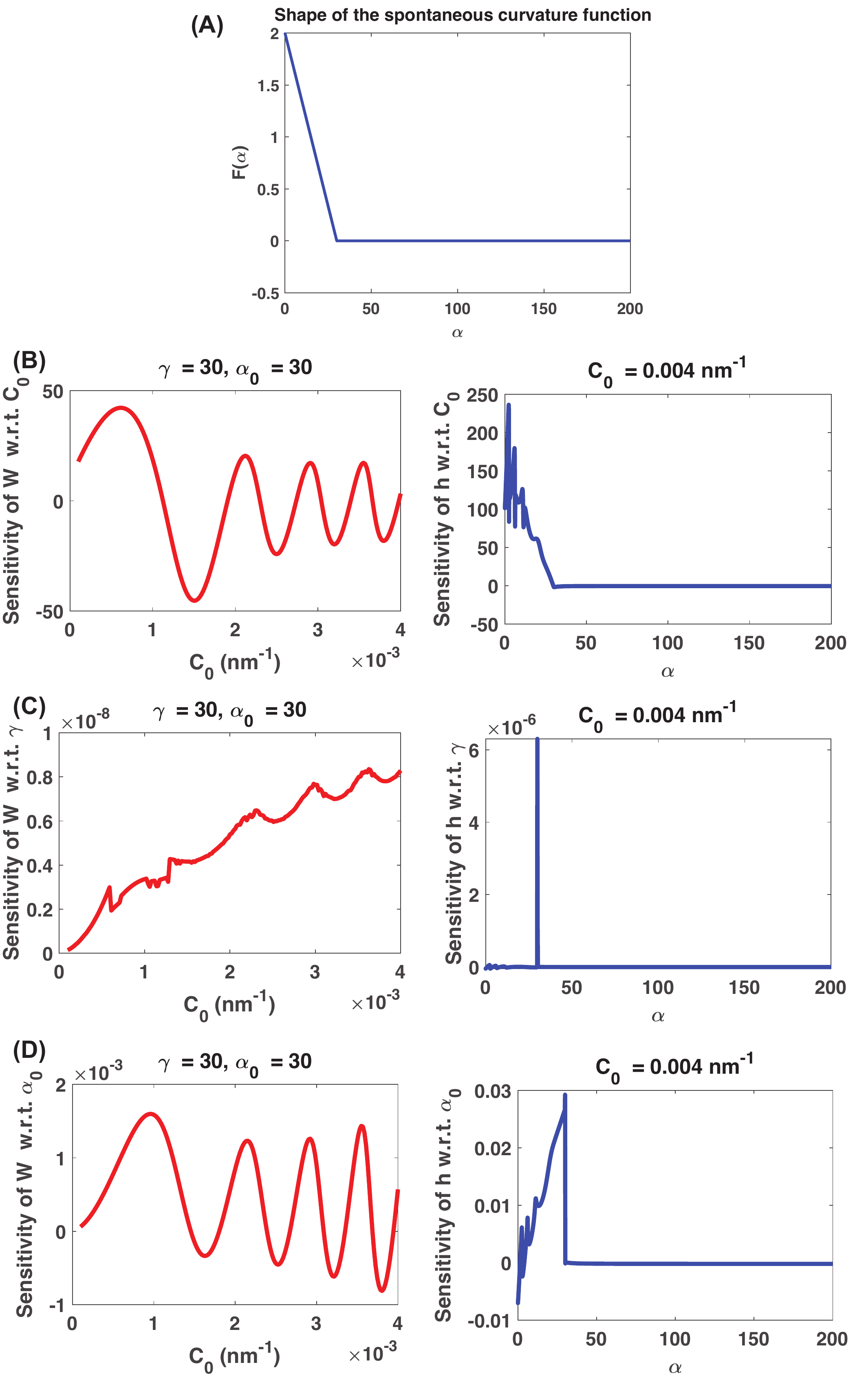}
    \caption{(A) Shape of the spontaneous curvature function with $\gamma=30$ and $\alpha_0=30$.  (B) The red curve on the left depicts $\frac{\partial W}{\partial C_0}|_{(C_0,\gamma,\alpha_0)=(C_0,30,30)}$ for $0\leq C_0\leq 0.004$. The blue curve on the right depicts $\frac{\partial h}{\partial C_0}|_{(C_0,\gamma,\alpha_0)=(0.004,30,30)}$ for $0\leq \alpha\leq 200$. (C) The red curve on the left depicts $\frac{\partial W}{\partial \gamma}|_{(C_0,\gamma,\alpha_0)=(C_0,30,30)}$ for $0\leq C_0\leq 0.004$. The blue curve on the right depicts $\frac{\partial h}{\partial \gamma}|_{(C_0,\gamma,\alpha_0)=(0.004,30,30)}$ for $0\leq \alpha\leq 200$. (D) The red curve on the left depicts $\frac{\partial W}{\partial t_0}|_{(C_0,\gamma,\alpha_0)=(C_0,30,30)}$ for $0\leq C_0\leq 0.004$. The blue curve on the right depicts $\frac{\partial h}{\partial \alpha_0}|_{(C_0,\gamma,\alpha_0)=(0.004,30,30)}$ for $0\leq \alpha\leq 200$. Note that $W$ is a scalar multiple of the total elastic bending energy of the membrane and $h(\alpha)$ is a scalar multiple of the mean curvature at the corresponding points on the membrane.}
    \label{fig:Figure7}
\end{figure}


\subsection{Dimensionless area formulation -- Mollifying type II spontaneous curvature}\label{sec:s-nondim-area-molly2}
All parameters are exactly the same as \S \ref{sec:s-nondim-area-2} except $\gamma$. This time we choose $\gamma$ to be equal to $0.08$ such that $\alpha_0\gamma<3$. This choice of $\gamma$ and $\alpha_0$ further smooths the transition in the spontaneous curvature function (see \Cref{fig:Figure9}A). As a result we will see that the graphs of curvature sensitivities (diagrams in the second column of \Cref{fig:Figure9}B, C, and D) are smoother near $\alpha_0=30$ (compared with what was observed in \S \ref{sec:s-nondim-area-2}). 
\begin{figure}[H]
    \centering
    \includegraphics[height=5in,width=0.7\textwidth]{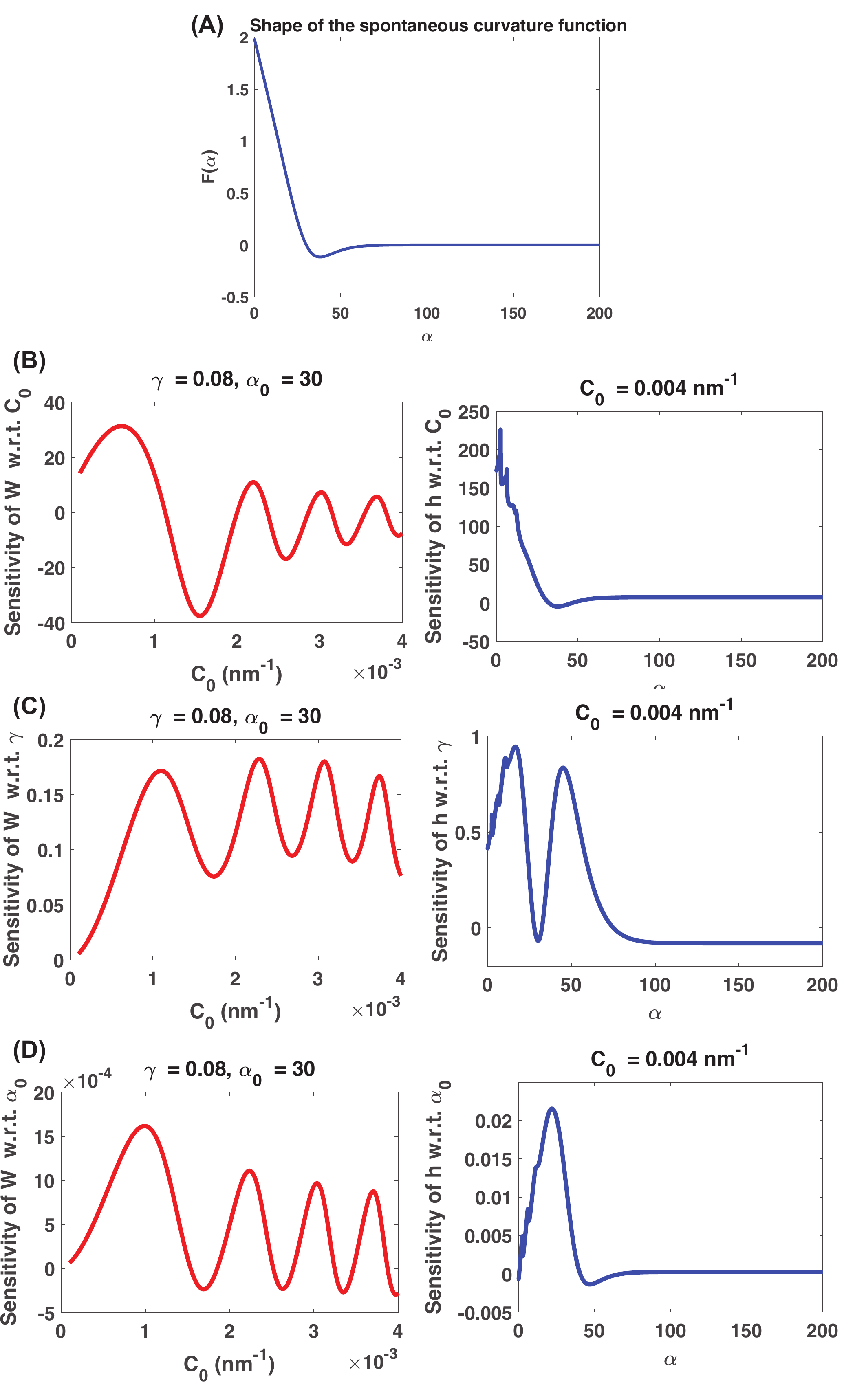}
    \caption{(A) Shape of the spontaneous curvature function with $\gamma=0.08$ and $\alpha_0=30$.  (B) The red curve on the left depicts $\frac{\partial W}{\partial C_0}|_{(C_0,\gamma,\alpha_0)=(C_0,0.08,30)}$ for $0\leq C_0\leq 0.004$. The blue curve on the right depicts $\frac{\partial h}{\partial C_0}|_{(C_0,\gamma,\alpha_0)=(0.004,0.08,30)}$ for $0\leq \alpha\leq 200$. (C) The red curve on the left depicts $\frac{\partial W}{\partial \gamma}|_{(C_0,\gamma,\alpha_0)=(C_0,0.08,30)}$ for $0\leq C_0\leq 0.004$. The blue curve on the right depicts $\frac{\partial h}{\partial \gamma}|_{(C_0,\gamma,\alpha_0)=(0.004,0.08,30)}$ for $0\leq \alpha\leq 200$. (D) The red curve on the left depicts $\frac{\partial W}{\partial t_0}|_{(C_0,\gamma,\alpha_0)=(C_0,0.08,30)}$ for $0\leq C_0\leq 0.004$. The blue curve on the right depicts $\frac{\partial h}{\partial \alpha_0}|_{(C_0,\gamma,\alpha_0)=(0.004,0.08,30)}$ for $0\leq \alpha\leq 200$. Note that $W$ is a scalar multiple of the total elastic bending energy of the membrane and $h(\alpha)$ is a scalar multiple of the mean curvature at the corresponding points on the membrane.}
    \label{fig:Figure9}
\end{figure}

\section{The derivatives} \label{app:derivatives}
For the sake of completeness and also facilitating the verification of our results, here we list the expressions for the various derivatives needed in our analysis.
\begin{itemize}
    \item Arc-length Formulation
    \begin{align}
D_{\vec{x}}\textbf{F}&=\begin{bmatrix}\frac{\partial \textbf{F}}{\partial x_1}& \frac{\partial \textbf{F}}{\partial x_2}& \cdots&\frac{\partial \textbf{F}}{\partial x_6}\end{bmatrix}\notag\\
&=\begin{bmatrix}
0 & 0 & \sin{x_3}& 0 & 0 & 0\\
0 & 0 & -\cos{x_3}& 0 & 0 & 0\\
\frac{-\sin{x_3}}{x_1^2}& 0 & \frac{\cos{x_3}}{x_1}& -2 & 0 & 0\\
\frac{x_5}{x_1^2}& 0 & 0& 0 & \frac{-1}{x_1} & 0\\
\frac{\partial f_5}{\partial x_1}& 0 & 4x_1(x_4-c)(\frac{-\cos{x_3}}{x_1})(x_4-\frac{\sin{x_3}}{x_1}) & \frac{\partial f_5}{\partial x_4}& 0 & \frac{-2x_1x_4}{\tilde{\kappa}}\\
0 & 0 & 0 & -2\tilde{\kappa}\dot{c}& 0 & 0
\end{bmatrix},
\end{align}
where
\begin{align}
&\frac{\partial f_5}{\partial x_1}=-2x_4\big[(x_4-c)^2+\frac{x_6}{\tilde{\kappa}})\big]+2(x_4-c)x_4^2+2(x_4-c)(x_4-\frac{\sin{x_3}}{x_1})^2\\
&\hspace{6cm}+4x_1(x_4-c)(\frac{\sin{x_3}}{x_1^2})(x_4-\frac{\sin{x_3}}{x_1}),\notag\\
&\frac{\partial f_5}{\partial x_4}=-2x_1(x_4-c)^2-\frac{2x_1x_6}{\tilde{\kappa}}+2x_1x_4^2
+2x_1(x_4-\frac{\sin{x_3}}{x_1})^2\\
&\hspace{6cm}+4x_1(x_4-c)[x_4-\frac{\sin{x_3}}{x_1}].\notag
\end{align}
\begin{align}
D_{\dot{\vec{x}}}\textbf{F}&=\begin{bmatrix}\frac{\partial \textbf{F}}{\partial \dot{x}_1}& \frac{\partial \textbf{F}}{\partial \dot{x}_2}& \cdots&\frac{\partial \textbf{F}}{\partial \dot{x}_6}\end{bmatrix}=\textrm{Id}\,\quad \textrm{(Id $=$ identity matrix)}.
\end{align}
\begin{align}
    \frac{\partial \textbf{F}}{\partial p_j}=
    \begin{bmatrix}
    0\\
    0\\
    0\\
    -\frac{\partial \dot{c}}{\partial p_j}\\
    4x_1x_4(x_4-c)\frac{\partial c}{\partial p_j}-
    2x_1[x_4^2+(x_4-\frac{\sin{x_3}}{x_1})^2]\frac{\partial c}{\partial p_j}\\
    -2\tilde{\kappa}x_4\frac{\partial \dot{c}}{\partial p_j}+
    2\tilde{\kappa}[\frac{\partial c}{\partial p_j}\dot{c}+c\frac{\partial \dot{c}}{\partial p_j}]
    \end{bmatrix}.
    \end{align}
    \begin{align}
    &w(\vec{x}(\vec{p}),\vec{p})=[x_4(t)-c(t)]^2x_1(t)\\
    &\hspace{2cm}\longrightarrow \frac{D w}{D p_j}=2[x_4-c](\frac{\partial x_4}{\partial p_j}-\frac{\partial c}{\partial p_j})x_1+(x_4-c)^2\frac{\partial x_1}{\partial p_j}.\notag
\end{align}
    \item Area Formulation:
    \begin{align}
&D_{\vec{x}}\textbf{F}=\begin{bmatrix}\frac{\partial \textbf{F}}{\partial x_1}& \frac{\partial \textbf{F}}{\partial x_2}& \cdots&\frac{\partial \textbf{F}}{\partial x_6}\end{bmatrix}\notag\\
&=\begin{bmatrix}
\frac{cos{x_3}}{x_1^2} & 0 & \frac{\sin{x_3}}{x_1}& 0 & 0 & 0\\
\frac{sin{x_3}}{x_1^2} & 0 & \frac{-\cos{x_3}}{x_1}& 0 & 0 & 0\\
\frac{2x_4}{x_1^2}-\frac{2\sin{x_3}}{x_1^3}& 0 & \frac{\cos{x_3}}{x_1^2}& \frac{-2}{x_1} & 0 & 0\\
\frac{2x_5}{x_1^3}& 0 & 0& 0 & \frac{-1}{x_1^2} & 0\\
4(x_4-c)(\frac{\sin{x_3}}{x_1^2})(x_4-\frac{\sin{x_3}}{x_1})& 0 & 4(x_4-c)(\frac{-\cos{x_3}}{x_1})(x_4-\frac{\sin{x_3}}{x_1}) & \frac{\partial f_5}{\partial x_4}& 0 & \frac{-2x_4}{\tilde{\kappa}}\\
0 & 0 & 0 & -2\tilde{\kappa}\dot{c}& 0 & 0
\end{bmatrix},
\end{align}
where
\begin{align}
\frac{\partial f_5}{\partial x_4}=-2(x_4-c)^2-\frac{2x_6}{\tilde{\kappa}}+2x_4^2+2(x_4-\frac{\sin{x_3}}{x_1})^2+4(x_4-c)[x_4-\frac{\sin{x_3}}{x_1}].
\end{align}
\begin{align}
D_{\dot{\vec{x}}}\textbf{F}&=\begin{bmatrix}\frac{\partial \textbf{F}}{\partial \dot{x}_1}& \frac{\partial \textbf{F}}{\partial \dot{x}_2}& \cdots&\frac{\partial \textbf{F}}{\partial \dot{x}_6}\end{bmatrix}=\textrm{Id}.\,
\end{align}
\begin{align}
    \frac{\partial \textbf{F}}{\partial p_j}=
    \begin{bmatrix}
    0\\
    0\\
    0\\
    -\frac{\partial \dot{c}}{\partial p_j}\\
    4x_4(x_4-c)\frac{\partial c}{\partial p_j}-
    2[x_4^2+(x_4-\frac{\sin{x_3}}{x_1})^2]\frac{\partial c}{\partial p_j}\\
    -2\tilde{\kappa}x_4\frac{\partial \dot{c}}{\partial p_j}+
    2\tilde{\kappa}[\frac{\partial c}{\partial p_j}\dot{c}+c\frac{\partial \dot{c}}{\partial p_j}]
    \end{bmatrix}.
    \end{align}
    \begin{align}
    w(\vec{x}(\vec{p}),\vec{p})=[x_4(\alpha)-c(\alpha)]^2\longrightarrow \frac{D w}{D p_j}=2[x_4-c](\frac{\partial x_4}{\partial p_j}-\frac{\partial c}{\partial p_j}).
\end{align}
   
    \item Type I Spontaneous Curvature

\begin{align}
&c(u)=\frac{R_0C_0}{2}\big[1-\tanh{[\gamma(u-u_0)]}\big].\\
&\dot{c}(u)=-\frac{R_0C_0\gamma}{2}\textrm{sech}^2[\gamma(u-u_0)].\\
& \frac{\partial c}{\partial p_1}=\frac{\partial c}{\partial C_0}=\frac{R_0}{2}\big[1-\tanh[\gamma(u-u_0)]\big].\\
& \frac{\partial c}{\partial p_2}=\frac{\partial c}{\partial \gamma}=\frac{-R_0C_0}{2}(u-u_0)\textrm{sech}^2[\gamma(u-u_0)].\\
& \frac{\partial c}{\partial p_3}=\frac{\partial c}{\partial u_0}=\frac{R_0C_0\gamma}{2}\textrm{sech}^2[\gamma(u-u_0)].
\end{align}

\begin{align}
& \frac{\partial \dot{c}}{\partial p_1}=\frac{\partial \dot{c}}{\partial C_0}=\frac{-R_0\gamma}{2}\textrm{sech}^2[\gamma(u-u_0)].\\
& \frac{\partial \dot{c}}{\partial p_2}=\frac{\partial \dot{c}}{\partial \gamma}\\
&\hspace{0.6cm}=\frac{-R_0C_0}{2}\textrm{sech}^2[\gamma(u-u_0)] +R_0C_0\gamma (u-u_0)\textrm{sech}^2[\gamma(u-u_0)]\tanh{[\gamma(u-u_0)]}.\notag\\
& \frac{\partial \dot{c}}{\partial p_3}=\frac{\partial \dot{c}}{\partial u_0}=-R_0c_0\gamma^2\textrm{sech}^2[\gamma(u-u_0)]\tanh{[\gamma(u-u_0)]}.
\end{align}
 \item Type II Spontaneous Curvature
    
    \begin{align}
&c(u)=\frac{-R_0C_0}{2u_0}(u-u_0)\big[1-\tanh{[\gamma(u-u_0)]}\big].\\
&\dot{c}(u)=\frac{-R_0C_0}{2u_0}(u-u_0)\big[1-\tanh{[\gamma(u-u_0)]}\big]+\frac{R_0C_0\gamma}{2u_0}(u-u_0)\textrm{sech}^2[\gamma(u-u_0)].\\
& \frac{\partial c}{\partial p_1}=\frac{\partial c}{\partial C_0}=\frac{-R_0}{2u_0}(u-u_0)\big[1-\tanh{[\gamma(u-u_0)]}\big].\\
& \frac{\partial c}{\partial p_2}=\frac{\partial c}{\partial \gamma}=\frac{R_0C_0}{2u_0}(u-u_0)^2\textrm{sech}^2[\gamma(u-u_0)].\\
& \frac{\partial c}{\partial p_3}=\frac{\partial c}{\partial u_0}=\frac{R_0C_0}{2u_0^2}(u-u_0)\big[1-\tanh{[\gamma(u-u_0)]}\big]\\
&\qquad+\frac{R_0C_0}{2u_0}\big[1-\tanh{[\gamma(u-u_0)]}\big]-
\frac{R_0C_0\gamma}{2u_0}(u-u_0)\textrm{sech}^2[\gamma(u-u_0)].\notag
\end{align}

\begin{align}
& \frac{\partial \dot{c}}{\partial p_1}=\frac{\partial \dot{c}}{\partial C_0}=\frac{-R_0}{2u_0}\big[1-\tanh{[\gamma(u-u_0)]}\big]+\frac{R_0\gamma}{2u_0}(u-u_0)\textrm{sech}^2[\gamma(u-u_0)].\\
& \frac{\partial \dot{c}}{\partial p_2}=\frac{\partial \dot{c}}{\partial \gamma}=\frac{R_0C_0}{u_0}(u-u_0)\textrm{sech}^2[\gamma(u-u_0)]\\&\hspace{4cm}-\frac{R_0C_0\gamma}{u_0}(u-u_0)^2\textrm{sech}^2[\gamma(u-u_0)]\tanh{[\gamma(u-u_0)]}.\notag\\
& \frac{\partial \dot{c}}{\partial p_3}=\frac{\partial \dot{c}}{\partial u_0}=\frac{R_0C_0}{2u_0^2}\big[1-\tanh{[\gamma(u-u_0)]}\big]-\frac{R_0C_0\gamma}{u_0}\textrm{sech}^2[\gamma(u-u_0)]\\
&\hspace{2cm}-\frac{R_0C_0\gamma}{2u_0^2}(u-u_0)\textrm{sech}^2[\gamma(u-u_0)]\notag\\
&\hspace{2cm}+\frac{R_0C_0\gamma^2}{u_0}(u-u_0)\textrm{sech}^2[\gamma(u-u_0)]\tanh{[\gamma(u-u_0)]}.\notag
\end{align}

\end{itemize}

\end{appendices}

\newpage

\bibliographystyle{abbrv}
\bibliography{ref}

\end{document}